\newtheorem{theorem}{Theorem}[section]
\newtheorem{lemma}[theorem]{Lemma}
\newtheorem{assumption}[theorem]{Assumption}
\newtheorem{remark}[theorem]{Remark}
\newtheorem{proposition}[theorem]{Proposition}
\newcommand{\eq}{:=}
\newcommand{\grad}{\boldsymbol \nabla}
\renewcommand{\div}{\grad \cdot}
\newcommand{\ddiv}{\operatorname{div}}
\newcommand{\jmp}[1]{[\![#1]\!]}
\newcommand{\BH}{\boldsymbol H}
\newcommand{\BL}{\boldsymbol L}
\newcommand{\BV}{\boldsymbol V}
\newcommand{\be}{\boldsymbol e}
\newcommand{\bn}{\boldsymbol n}
\newcommand{\bq}{\boldsymbol q}
\newcommand{\br}{\boldsymbol r}
\newcommand{\bv}{\boldsymbol v}
\newcommand{\CF}{\mathcal F}
\newcommand{\CO}{\mathcal O}
\newcommand{\CP}{\mathcal P}
\newcommand{\CR}{\mathcal R}
\newcommand{\CT}{\mathcal T}
\newcommand{\BCP}{\boldsymbol{\CP}}
\DeclareMathOperator*{\argmin}{arg\,min}
\begin{document}

\begin{frontmatter}



\title{Minimum-residual a posteriori error estimates for HDG discretizations of the Helmholtz equation}


\author[1]{Liliana Camargo} 
\author[2]{Sergio Rojas} 
\author[3,4]{Patrick Vega} 

\affiliation[1]{organization={Departamento de Matemáticas, Universidad de La Serena},
            city={La Serena}, 
            country={Chile}}

\affiliation[2]{organization={School of Mathematics, Monash University},
            city={Melbourne},
            country={Australia}}

\affiliation[3]{organization={Departamento de Matemática y Ciencia de la Computación\\Universidad de Santiago de Chile},
            city={Santiago}, 
            country={Chile}}

\affiliation[4]{organization={Centro de Investigación en Ingeniería Matemática (CI$^2$MA)},
            city={Concepción}, 
            country={Chile}}

\begin{abstract}
We propose and analyze two a posteriori error indicators for hybridizable discontinuous Galerkin (HDG) discretizations of the Helmholtz equation. These indicators are built to minimize the residual associated with a local superconvergent postprocessing scheme for the primal variable, measured in a dual norm of an enlarged discrete test space. The residual minimization is reformulated into equivalent local saddle-point problems, each yielding a superconvergent postprocessed approximation of the primal variable in the asymptotic regime for sufficiently regular exact solutions and a built-in residual representation with minimal computational effort. Both error indicators are based on frequency-dependent postprocessing schemes and verify reliability and efficiency estimates for a frequency-weighted $H^1$-error for the scalar variable and the $L^2$-error for the flux. We illustrate our theoretical findings through ad-hoc numerical experiments.
\end{abstract}



\begin{keyword}

hybridizable discontinuous Galerkin method \sep residual minimization \sep postprocessing \sep superconvergence \sep a posteriori error analysis \sep adaptive mesh refinement



65N12 \sep 65N15 \sep 65N22 \sep 65N30 \sep 65N50

\end{keyword}

\end{frontmatter}




\tableofcontents

\section{Introduction}
\label{intro}

The Helmholtz equation serves as an essential model for many physical applications involving time-harmonic wave propagation phenomena, such as elastodynamics, electrodynamics, and linear acoustic wave propagation  \cite{farhat_harari_hetmaniuk_2003,hu_song_2020,li_liu_yang_2023,nguyen_peraire_reitich_cockburn_2015}. However, the analysis of numerical schemes based on variational formulations deduced from Helmholtz problems with large wave number remains challenging due to the indefiniteness of the associated sesquilinear forms.

Significant contributions have been made to the error analysis of discretization methods used for the numerical solution of the Helmholtz equation with high wavenumber; among them, we highlight finite element \cite{harari_hughes_1991,harari_hughes_1992,ihlenburg_babuska_1995,thompson_pinsky_1995,zhu_wu_2014}, $hp$-finite element \cite{ihlenburg_babuska_1997,melenk_sauter_2011,zhu_wu_2013}, discontinuous Galerkin \cite{feng2009discontinuous,feng_wu_2011,feng_xing_2013,melenk_parsiania_sauter_2013} and Trefftz methods \cite{Hiptmair2016}. A key challenge in these methods is the pollution effect \cite{babuska1997pollution}, which significantly impacts preasymptotic error estimates. This effect is typically characterized by a pollution term, deduced using data estimates or duality arguments \cite{ihlenburg_babuska_1997}. In \cite{zhu2021preasymptotic} and references therein, it is shown that the pollution effect can be reduced by selecting appropriate penalty parameters in 2D. Moreover, they observed that while their postprocessing procedure improved accuracy, the convergence order of the pollution term remained unchanged.  

Discontinuous Galerkin (DG) methods offer several attractive features in the context of the Helmholtz equation. Notable among these are their ability to accommodate variable-degree approximations across elements, which helps control dispersion effects by compensating for varying element sizes, and their capacity to handle nonconforming meshes, such as those with hanging nodes. However, a significant drawback of DG methods is the increased dimension of the standard DG piecewise polynomial space compared to the associated conforming space in $H^1$. This limitation is particularly troublesome in high-frequency wave propagation problems, where the dimensionality of the approximation space grows faster than $\CO(\omega^d)$ in two or three dimensions ($d=2, 3$). As a result, the space's size scales with the wavenumber $\omega$, posing computational challenges as $\omega$ increases. 

Since many practical problems involve high frequencies, DG methods may appear less useful than conforming FEM as a consequence of the limitation mentioned above. However, the hybridizable discontinuous Galerkin (HDG) method \cite{CoDoGu2008, CoGoLa2009} overcomes this drawback by significantly reducing the size of the associated linear systems. 

HDG methods are a class of DG methods that introduce numerical fluxes defined on the skeleton of the mesh. These flux variables enable the solution within each element to be expressed in terms of local quantities, taking advantage of the fact that volume degrees of freedom can be efficiently parameterized element-wise using surface degrees. Consequently, the size of the global linear system is substantially reduced compared to standard DG schemes, as the only global unknowns are those defined on the mesh skeleton. This reduction alleviates the challenges posed by the increased degrees of freedom in DG methods. Moreover, HDG methods allow for efficient implementation through static condensation and parallelization, making them a practical and scalable choice for high-frequency wave propagation problems.

The literature presents several approaches for the a priori error analysis of HDG methods for the Helmholtz equation. In \cite{griesmaier2011error}, the authors prove optimal convergence rates for the HDG discretization of the interior Dirichlet problem under the constraint that $\omega^2 h$ is sufficiently small. However, the dependence of the error estimates on the wave number $\omega$. In \cite{chen2013hybridizable}, an $hp$-HDG method was developed for the Helmholtz equation with an impedance boundary condition and high wave number. This work accounted for the dependence on $\omega$ in the a priori error estimates but obtained a suboptimal convergence rate for the error in the flux variable. In \cite{cui2013analysis}, the authors addressed the Helmholtz equation with a Robin boundary condition, providing error bounds that included positive powers of $h^{-1}$, making them non-optimal. In 2021, \cite{zhu2021preasymptotic} presented a preasymptotic error analysis for the high-frequency case with an impedance boundary condition. This analysis focused on the linear case and achieved optimal convergence rates when $\omega^3 h^2$ is sufficiently small. The error estimates in \cite{chen2013hybridizable} and \cite{cui2013analysis} hold without imposing mesh constraints. However, \cite{cui2013analysis} assumes a star-shaped domain, in contrast to \cite{chen2013hybridizable}, \cite{griesmaier2011error}, and \cite{zhu2021preasymptotic}, which assume convex domains. The last assumption is necessary to ensure sufficient regularity when using a duality argument to establish the error estimates.

Recently, \cite{zhu_wu_2024} proposed a $(p,p-1)$-HDG method for the Helmholtz problem with a high wavenumber. However, this method is unsuitable for our purposes, as the $L^2$-error of the flux converges as $h^p$ rather than as the optimal rate $h^{p+1}$. This limitation arises from using approximations of degree $p-1$ instead of the typical choice of a piecewise polynomial approximation of degree $p$ for the flux.

Computer hardware and software advances enable numerical simulations in 2D and 3D domains. However, these simulations often have significant drawbacks, including high energy consumption and long computation times. These challenges are typically mitigated by adapting the mesh size through local refinements guided by a posteriori error estimators. The design and analysis of such estimators have been an active area of research for the past 40 years (see, e.g., \cite{ainsworth_oden_2000a, demkowicz_2006a, verfurth} and references therein).

A posteriori error estimates have been well established for finite element discretizations of second-order elliptic problems \cite{ainsworth_oden_2000a,verfuhrt1996review}. The earliest works on a posteriori error estimates for the Helmholtz equation focused on linear finite elements for one-dimensional problems \cite{babuvska1997posteriori,babuska_ihlenburg_strouboulis_gangaraj_1997}. These studies showed that in the asymptotic regime $\omega^2 h \ll 1$, both the residual-based \cite{verfurth_2013} and the Zienkiewicz-Zhu \cite{zz_1987} a posteriori error estimators are reliable and efficient, meaning they provide upper and lower bounds for the discretization error, up to a constant. Further works, including a posteriori error estimation for goal-oriented and $hp$-adaptive strategies, can be found in \cite{darrigrand_pardo_muga_2015,peraire_patera_1999,stewart_hughes_1996a,stewart_hughes_1996b,stewart_hughes_1997,stewart_hughes_1997b} and the references therein. Subsequent works have proposed and analyzed residual-based a posteriori error estimators for high-order FEM and DG discretizations of $d$-dimensional ($d=2,3$) problems. In \cite{dorfler_sauter_2013a,sauter_zech_2015a}, error upper bounds were established, even in the pre-asymptotic regime, for problems with piecewise polynomial source and boundary data. Residual minimization techniques were employed in \cite{monsuur2023pollution} to discretize a well-posed ultra-weak first-order variational formulation. This work's a posteriori error estimator was derived as an optimal test norm, defined using the bounded and invertible first-order partial differential operator, and measured in a canonical norm.

In the context of HDG, starting with the seminal works \cite{MR2914423,MR3033028} for diffusion problems, few contributions have been made in a posteriori error analysis. In addition to the above contributions associated with residual-type error estimators, \cite{ainsworth_fu_2018} proposes error estimators based on equilibrated fluxes. For more complex equations, we highlight (residual-based) a posteriori error analyses for convection-diffusion \cite{chen_li_qiu_2016}, Stokes/Brinkman \cite{araya_solano_vega_2019a}, Oseen \cite{araya_solano_vega_2019b}, Navier-Stokes \cite{leng_2021}, and Maxwell \cite{chaumontfrelet_vega_2024} (see also \cite{chen_qiu_shi_2018} for its coercive version) problems, among others.

An alternative to directly constructing a posteriori error indicators is to design finite element approximations based on residual minimization in discrete dual norms using an enriched discrete test space. This approach forms a key foundation of Discontinuous Petrov-Galerkin (DPG) methods introduced in \cite{DemGopCMAME2010} (see also \cite{DemGopBOOK-CH2014} for an overview), one of the most popular methods based on residual minimization. A standard procedure for solving the residual minimization problem employs a mixed formulation, allowing for the simultaneous computation of the discrete solution and its discrete residual representative \cite{CohDahWelM2AN2012}. The main advantage of these procedures is that one naturally incorporates the built-in residual representative to estimate the true error and guide adaptive mesh refinements \cite{DPG3}.

In \cite{calo2020adaptive}, the authors introduced a residual minimization-based finite element method relying on nonconforming discrete dual norms. Similar to DPG, this approach enables the simultaneous computation of a conforming solution and a residual representative in the nonconforming discrete space, which can be leveraged for automated mesh adaptivity. Within this framework, finite element methods based on residual minimization have been extended to address problems such as advection-diffusion-reaction \cite{cier2021automatically} and incompressible flow \cite{kyburg2022incompressible, los2021dgirm}.

More recently, \cite{muga_rojas_vega_2022a} proposed an adaptive superconvergent finite element method for solving partial differential equations involving a diffusion term. This approach combines a superconvergent postprocessing technique \cite{stenberg_1991} with a local residual minimization strategy to handle mixed formulations effectively. While various strategies exist to enhance convergence through postprocessing \cite{tofu_manuel_2024,stenberg_1991}, we adopt the method in \cite{muga_rojas_vega_2022a} as it not only provides a postprocessed approximation but also yields an a posteriori error indicator as a natural byproduct.

The approach proposed in \cite{muga_rojas_vega_2022a} begins with a discrete mixed formulation designed to approximate a model problem. This formulation is the basis for a postprocessing scheme targeting the scalar variable. Subsequently, the residual associated with the postprocessing equation is minimized and measured in a norm defined on the dual of the (discrete) test space used in the postprocessing scheme.

In this work, we focus on an HDG method for the Helmholtz equation \cite{zhu2021preasymptotic}, in contrast to the BDM discretization \cite{BDDF, BDM} of the Poisson equation considered in \cite{muga_rojas_vega_2022a}, by adapting the postprocessing scheme in \cite{muga_rojas_vega_2022a} to formulations tailored for the Helmholtz equation. The main objective of our work is to improve the quality of scalar variable approximation through superconvergent postprocessed approximations of the scalar variable and a posteriori error indicators to drive adaptive mesh refinements, which are computed locally and in parallel, and therefore, at low computational cost. The design and construction of the postprocessed approximation and the a posteriori error indicator can start from any HDG discretization of the Helmholtz equation for which a priori error estimates, ideally optimal, have been proven. Last is the key to obtaining superconvergence for the error in the scalar variable, for the postprocessed approximation, and an a posteriori error indicator for an error containing such a superconvergent term. We choose the method proposed in \cite{zhu2021preasymptotic} over other HDG methods due to its less restrictive assumptions about the domain and its capability to handle high-order discretizations effectively. We prove a priori error estimates for the postprocessed approximations and a posteriori error estimates using our error indicators, tracking the frequency dependence in the error estimates. To the best of the authors' knowledge, this work represents the first contribution to the a posteriori error analysis for an HDG discretization of the Helmholtz problem.

The remainder of the paper is organized as follows. In Section \ref{sec:prelim}, we introduce notations and the model problem whose solution we will approximate. An HDG method for the Helmholtz equation is introduced in Section \ref{sec:HDG}. In addition, we recall the stability and a priori error estimates for the method. Two minimum-residual postprocessed approximations and two a posteriori error indicators are introduced in Section \ref{sec:minres_alt}.
Furthermore, we state our main results: a priori error estimates for the postprocessed approximations, and a posteriori error estimates for those errors in terms of our error indicators. We provide numerical experiments to illustrate our main results in Section \ref{sec:numerics} and draw our conclusions in Section \ref{sec:conclusion}. Finally, the auxiliary and complementary independent results we use are contained in the Appendices \ref{app:aux_results} and \ref{app:apriori_minus}.

\section{Functional and discrete settings}\label{sec:prelim}

Let $\Omega \subset \mathbb{R}^d$, $d=2,3$, be a bounded Lipschitz domain with a polygonal or polyhedral boundary $\Gamma = \partial\Omega$, partitioned into two disjoint relatively open sets $\Gamma_D$ and $\Gamma_R$. This manuscript will use standard notation for Sobolev spaces $H^s(\Omega)$, including their norms and seminorms. Vector-valued functions and their corresponding spaces will be denoted in bold. Additionally, the following function spaces will be employed: $H^1_{\Gamma_D}(\Omega) = \{w \in H^1(\Omega): w|_{\Gamma_D} = 0\}$ and $\BH(\ddiv,\Omega) = \{\bv \in \BL^2(\Omega): \div \bv \in L^2(\Omega)\}$. We refer the reader to \cite{adams_fournier_2003a} for further details.

Let us now introduce a family of conforming, in the sense of \cite{ciarlet}, and shape-regular discretizations of $\Omega$, denoted by $\{\CT_h\}_{h>0}$. $\CT_h$ is a partition of $\Omega$ into nonoverlapping simplicial elements $K$, such that the nonempty intersection of two is a common vertex, a common edge in 2D, or a common face in 3D. $h_K$ denotes the diameter of each $K \in \CT_h$ and the mesh size is defined thus $h\eq\max_{K\in\CT_h}h_K$. The diameter of an edge/face $ F $ of a simplex $K$ is denoted by $h_F$. We further introduce the set of all edges/faces known as the skeleton $ \CF_h\eq\CF_h^I\cup\CF_h^{\partial}$, where $\CF_h^{\partial}\eq\CF_h^D\cup\CF_h^R$, while $\CF_h^D$ and $\CF_h^R$ are the set of boundary edges/faces that lay on the boundaries $ \Gamma_D $ and $\Gamma_R$, respectively. The remaining edges/faces are called interior and correspond to the elements of the set $ \CF_h^I $. Moreover, for given $F\in\CF_h$ and $ K\in\CT_h $, let us define the sets $\CF_K\eq\{F\in\CF_h:F\subset\partial K\}$, $\CT_F\eq\{K\in\CT_h:F\in\CF_K\}$, $ \CF_K^I\eq\CF_K\cap\CF_h^I$, $ \CF_K^D\eq\CF_K\cap\CF_h^D$, and $ \CF_K^R\eq\CF_K\cap\CF_h^R $.

We define the following discrete spaces in terms of the mesh $\CT_h$ and the skeleton $\CF_h$, 
\begin{align*}
	\BCP_k(\CT_h)&\eq \{\bv\in \BL^2(\CT_h) : \bv|_K \in \BCP_k(K) \quad\forall \ K \in \CT_h \}, \\
	\CP_k(\CT_h)&\eq \{w \in  L^2(\CT_h) : w|_K \in \CP_k(K) \quad\forall \ K \in \CT_h\}, \\
	\CP_k(\CF_h)&\eq \{\mu \in  L^2(\CF_h) : \mu|_F \in \CP_k(F) \quad\forall \ F \in \CF_h \},\\
    \CR_k(\partial\CT_h)&\eq \{w \in  L^2(\partial\CT_h) : w|_{\partial K} \in \CR_k(\partial K) \quad\forall \ K \in \CT_h\},
	\end{align*}
where $\CP_k(K)$, $\CP_k(F)$, and $\CR_k(\partial K)\eq\{w\in L^2(\partial K):w|_F\in\CP_k(F)\quad\forall F\subset\partial K\}$ denote the spaces of complex-valued polynomials of degree at most $k \in \mathbb{N}\cup\{0\}$ on $K$, $F$, and $\partial K$, respectively. Here, $\BCP_k(K) \eq [\CP_k(K)]^d$.

For a scalar-valued function $ w:\overline{\Omega}\to\mathbb{R} $, the jump of $ w $ across $ F\in\CF_h $ is defined as
\begin{align*}
	\jmp{w}_F\eq\begin{cases}
		w^+|_F-w^-|_F,&\text{if }F=\partial K^+\cap\partial K^-,\\
		w|_F,&\text{if }F\in\CF_h^{\partial},
	\end{cases}
\end{align*}
where $w^+\eq w|_{K^+}$ and $w^-\eq w|_{K^-}$. Here, $K^+ $ and $ K^- $ are such that the normal to $ F $ points from $ K^+ $ to $ K^- $.

We also define the inner products
\begin{align*}
    \left(\cdot, \cdot \right)_{\CT_h} \eq \sum_{K \in \CT_h} \left(\cdot, \cdot \right)_{K},\qquad \left(\cdot, \cdot \right)_{1,\CT_h} \eq \sum_{K \in \CT_h} \left(\cdot, \cdot \right)_{1,K},\qquad \langle\cdot, \cdot \rangle_{\partial \CT_h} \eq \sum_{K \in \CT_h} \langle\cdot, \cdot \rangle_{\partial K},
\end{align*}
together with the norms
\begin{align*}
&\| \cdot \|_{\CT_h} \eq \left( \sum_{K \in \CT_h} \| \cdot \|_K^2 \right)^{1/2},\quad\|\cdot\|_{1,\CT_h}\eq\left(\sum_{K\in\CT_h}\|\cdot\|_{1,K}^2\right)^{1/2},\quad\|\cdot\|_{1,J,\CT_h}\eq\left(\sum_{K\in\CT_h}\|\cdot\|_{1,J,K}^2\right)^{1/2},\\
&|\cdot|_{1,\CT_h}\eq\left(\sum_{K\in\CT_h}|\cdot|_{1,K}^2\right)^{1/2},\quad\text{and}\quad |\cdot|_{J,\CT_h}\eq\left(\sum_{K\in\CT_h}|\cdot|_{J,K}^2\right)^{1/2}
 \end{align*}
and, for $w\in H^1(K)$, its local versions $\|w\|_{1,K}^2\eq\omega^2\|w\|_K^2+|w|_{1,K}^2$ and $\|w\|_{1,J,K}^2\eq\|w\|_{1,K}^2+|w|_{J,K}^2$, with
\begin{align*}
    |w|_{J,K}^2\eq \sum_{F\in\CF_K^I\cup\CF_K^D}h_F^{-1}\|\!\jmp{w}\|_F^2+\sum_{F\in\CF_K^R}\omega^2 h_F\|w\|_F^2.
\end{align*}

Here, $(\cdot,\cdot)_K$, $\langle\cdot,\cdot\rangle_{\partial K}$, and $\langle\cdot,\cdot\rangle_F$ are the usual inner products on $L^2(K)$, $L^2(\partial K)$, and $L^2(F)$, respectively, and for $w,v\in H^1(K)$, we denote by $(w,v)_{1,K}\eq\omega^2(w,v)_K+(\grad w,\grad v)_K$ a frequency-weighted $H^1(K)$-inner product.

For each $K \in \CT_h$, and $r\in\mathbb{N}\cup\{0\}$, we denote by $Q^r_K : L^2(K)\to \CP_r(K)$ the $L^2$-projection from $L^2(K)$ onto $\CP_r(K)$, and $Q^r_h : L^2(\Omega)\to \CP_r(\CT_h)$ its global counterpart, that is, $(Q^r_h v)|_K = Q^r_K(v|_K)$. In particular, we set $Q_h \eq Q^0_h$ and $Q_K \eq Q^0_K$ for each $K \in \CT_h$, and we recall that the action of the operator $I - Q_K$ provides zero-mean functions on $K$.

In the remainder of this document, if $A,B\geq 0$, we will write $A \lesssim  B$, when there exists a constant $C > 0$, independent of $h$ and $\omega$, such that $A\leq C B$, to simplify the notation. We also write $A\eqsim B$ if $A\lesssim B$ and $B\lesssim A$.

\section{An HDG discretization of the Helmholtz equation}
\label{sec:HDG}

This section reviews the HDG scheme for the Helmholtz equation proposed in \cite{zhu2021preasymptotic}, with slight modifications to allow for mixed boundary conditions. While \cite{zhu2021preasymptotic} focuses on linear elements, we extend its analysis to high-order schemes to align with the framework of interest: a high-order HDG discretization with optimal and frequency-explicit a priori error estimates for the Helmholtz equation. Hereinafter, we consider $p\geq 1$.\\

\noindent
Given the complex-valued and square-integrable functions $f:\Omega\to\mathbb{C}$ and $g:\Gamma_R\to\mathbb{C}$, we seek solutions $(u, \bq) \in H^1_{\Gamma_D}(\Omega)\times\BH(\ddiv,\Omega)$, such that 
\begin{subequations}
\label{helmholtz}
\begin{align}
\bq + \grad u &= 0 \quad \text{in }\Omega,\label{eq:helm1} \\
\div \bq -\omega^2 u&= f  \quad\hspace{-0.025cm}\text{in }\Omega,\label{eq:helm2}  \\
u &= 0\quad \text{on }\Gamma_D, \label{eq:helm3}  \\
-\bq \cdot \bn + i \,  \omega u &= g\quad \text{on }\Gamma_R, \label{eq:helm4} 
\end{align}
\end{subequations}
where $i$, $\omega$, and $\bn$ denote the imaginary unit, the wavenumber, and the unit normal vector pointing outward $\Omega$, respectively.

\subsection{The HDG discretization}\label{subsec:HDG}

We consider the HDG scheme associated to \eqref{helmholtz} consisting in seeking $(\bq_h, u_h, \widehat{u}_h) \in \BCP_p(\CT_h) \times \CP_p(\CT_h) \times \CP_p(\CF_h)$, approximating $(\bq, u, u\vert_{\CF_h})$, such that\footnote{Here, we use $\langle\cdot, \cdot \rangle_{\partial \CT_h\setminus\mathcal{O}}\eq \sum_{K \in \CT_h} \langle\cdot, \cdot \rangle_{\partial K\setminus\mathcal{O}}$, with $\mathcal{O}\subset\overline{\Omega}$, and set $\langle\cdot, \cdot \rangle_{\Gamma_\star}\eq\langle\cdot, \cdot \rangle_{\CF_h^\star}$, with $\star\in\{D,R\}$.}
\begin{subequations}\label{eq:HDG_noextrarhs}
\begin{align}
(\bq_h, \bv_h)_{\CT_h} - (u_h, \div \bv_h)_{\CT_h} + \langle\widehat{u}_h, \bv_h \cdot \bn \rangle_{\partial \CT_h} &= 0,\label{eq:HDGa}\\
	- \omega^2(u_h,w_h)_{\CT_h}-(\bq_h, \grad w_h)_{\CT_h} + \langle\widehat{\bq}_h \cdot \bn, w_h\rangle_{\partial \CT_h} 
 &=  (f,w_h)_{\CT_h},\\
\langle\widehat{\bq}_h \cdot \bn, \mu_h\rangle_{\partial \CT_h \setminus \Gamma} &= 0, \\
\langle -\widehat{\bq}_h\cdot \bn + i\omega\widehat{u}_h,\mu_h\rangle_{\Gamma_R} &= \langle g,\mu_h\rangle_{\Gamma_R},\\
\langle\widehat{u}_h,\mu_h\rangle_{\Gamma_D} &= 0,
\end{align}
for all $(\bv_h, w_h, \mu_h)\in \BCP_p(\CT_h) \times \CP_p(\CT_h) \times \CP_p(\CF_h)$. Here
\begin{eqnarray}\label{eq:Relationfluxes}
    \widehat{\bq}_h \cdot \bn \eq \bq_h \cdot \bn + \tau (u_h - \widehat{u}_h) \quad \text{on } \partial \CT_h,
\end{eqnarray}
\end{subequations}
where $\tau:\partial\CT_h\to\CR_0(\partial\CT_h)$ is a stabilization function satisfying $\tau\geq 0$ and $\tau|_{\partial K}\neq 0$ for all $K\in\CT_h$.

\subsection{The HDG projection}
\label{sec:HDG_proj}

The following projection, designed to mimic the expression of the numerical flux, is pivotal to deriving the a priori error analysis of the HDG method \eqref{eq:HDG_noextrarhs}. For a given $K \in \CT_h$ and a nonnegative stabilization function $\tau$, the HDG projector $\Pi_h$ is defined as
\begin{align*}
\Pi_h : \BH^1(\CT_h) \times H^1(\CT_h) &\rightarrow \BCP_p(\CT_h) \times \CP_p(\CT_h)\\
(\bv, w) &\mapsto \Pi_h(\bv, w) \eq (\Pi_{\BV}\bv, \Pi_W w),
\end{align*}
which is characterized by the solution of the system of equations
\begin{subequations}\label{eq:proj}
    \begin{alignat}{6}\label{projectionPiV}
    (\Pi_{\BV} \bv,  \br_h)_K 
    &= (\bv, \br_h)_K&&\qquad\forall \br_h\in \BCP_{p-1}(K),\\
    (\Pi_W w, v_h)_K &= (w, v_h)_K &&\qquad\forall v_h \in \CP_{p-1}(K),\label{projectionPiW}\\
    \langle \Pi_{\BV} \bv \cdot \bn +\tau \Pi_W w, \mu_h \rangle_{\partial K} 
    &= \langle\bv \cdot \bn +\tau w, \mu_h\rangle_{\partial K}  &&\qquad\forall \mu_h \in \CR_p(\partial K).
    \label{projectionPiF}
    \end{alignat}
\end{subequations}

\noindent 
It is worth mentioning that the projector admits complex-valued functions; see \cite[Section 3.5.1]{MR3970243} for a more detailed description. The following result establishes the approximation properties of $\Pi_{\BV}$ and $\Pi_W$.

\begin{theorem}[{\cite[Proposition 3.6]{MR3970243}}]\label{thm:projection}
If $(\bv,w):K\to\mathbb{C}^d\times\mathbb{C}$ is sufficiently smooth and $\tau\in\CR_0(\partial K)$, then system \eqref{eq:proj} is uniquely solvable for $\Pi_{\BV}\bv$ and $\Pi_W w$. Furthermore, there is a constant $C$ independent of $K$, $\omega$ and $\tau$ such that
\begin{subequations}\label{ApproxPropertiesProjectors}
\begin{align}
\|\Pi_{\BV} \bv - \bv\|_{K} 
&\lesssim h_{K}^{l_{\bv}+1} |\bv|_{l_{\bv}+1,K}
+ h_{K}^{l_w+1} \, \tau_K^* |w|_{l_w+1,K},\label{ApproxPropertiesProjectors1}\\
\|\Pi_W w - w\|_K
&\lesssim h_{K}^{l_w+1} |w|_{l_w+1,K}
+ \frac{h_K^{l_{\bv}+1}}{\tau_K^{\max}} \, |\div \bv|_{l_{\bv},K}\label{ApproxPropertiesProjectors2},
\end{align}
\end{subequations}
for $l_w,l_{\bv}\in[0,k]$, $k\in\mathbb{N}$, where $\tau_K^{\max}\eq\max\tau|_{\partial K}$ and $\tau_K^*\eq\max\tau|_{\partial K\setminus F^*}$, being $F^*$ the face of $K$ at which $\tau|_{\partial K}$ is maximum.
\end{theorem}

\subsection{A priori error estimates}
\label{subsec:apriori}

A key ingredient of our a posteriori error estimator is a postprocessing of the HDG approximation. This postprocessing convergence, in turn, relies on the a priori error estimates for the HDG scheme.
	
To simplify the notation, we define the following quantities: $\boldsymbol{\varepsilon}_h^q\eq\Pi_{\BV}\bq-\bq_h\in\BCP_p(\CT_h)$ and $\varepsilon_h^u\eq\Pi_W u-u_h\in\CP_p(\CT_h)$. We also define the approximation errors $\be_h^q\eq\Pi_{\BV}\bq-\bq$ and $e_h^u\eq\Pi_W u-u$. 

The following result is a slight adaptation of \cite[Theorem 2]{zhu2021preasymptotic} to its high-order version with mixed boundary conditions. As the proof follows similarly, we omit it here.

\begin{theorem}[A priori error estimates for HDG] \label{thm:apriori}
Let $(u,\bq)$ be the solution of \eqref{helmholtz}. Let $(u_h,\bq_h) \in\CP_p(\CT_h) \times \BCP_p(\CT_h)$ solve \eqref{eq:HDG_noextrarhs} and suppose $\tau\eqsim\omega$. If $u\in H^s(\Omega)$, then there exists a positive constant $C_0<1$ independent of $\omega$, $h$, $p$, and $\tau$, such that if $\omega^3 h^2\leq C_0$, it hold:
\begin{align}
\label{eq:apriori_u_final}
    \omega\|u-u_h\|_{\CT_h}&\lesssim
        \left((\omega h)^{r+1}+\omega(\omega h)^{r+1}\right)\!C_u,\\
    \label{eq:apriori_q_final}
    \|\bq-\bq_h\|_{\CT_h}&\lesssim
        \left((\omega h)^{r+1}+\omega(\omega h)^{r+2}\right)\!C_u,\\
    \omega\|\epsilon_h^u\|_{\CT_h}&\lesssim((\omega h)^{r+1}+\omega(\omega h)^{r+1})C_u,    
\end{align}
where $C_u\eq\omega^{-(r+1)}|u|_{r+2,\Omega}$ and $r=\min\{s-2,p\}$. Additionally, if $\Omega$ is convex, it holds
\begin{align}
\label{eq:apriori_u_final_convex}
    \omega\|u-u_h\|_{\CT_h}&\lesssim
        \left((\omega h)^{r+1}+\omega(\omega h)^{r+2}\right)\!C_u,\\
    \label{eq:apriori_q_final2}
    \|\bq-\bq_h\|_{\CT_h}&\lesssim
        \left((\omega h)^{r+1}+\omega(\omega h)^{r+2}\right)\!C_u,\\
    \omega\|\epsilon_h^u\|_{\CT_h}&\lesssim((\omega h)^{r+2}+\omega(\omega h)^{r+2})C_u.  
\end{align}
\end{theorem}

\section{Postprocessing schemes and a posteriori error indicators based on local residual minimization}
\label{sec:minres_alt}

We aim to compute an enhanced approximation of the scalar variable using postprocessing techniques with superconvergence properties and adaptive mesh refinements. To achieve this, we adopt a discrete dual residual minimization approach, which provides a superconvergent postprocessed approximation and yields a localizable discrete Riesz representative. This representative enables the definition of a posteriori error estimators, facilitating effective adaptive mesh refinement.

We introduce, for each $K\in\CT_h$, the postprocessed approximations $\nu_h^\pm\in\CP_{p+1}(\CT_h)$, defined through the following residual minimization problems (see \cite{muga_rojas_vega_2022a}):
\begin{align}\label{eq:resmin}
    \nu_h^\pm|_K\eq&\,\argmin_{w_K\in\CP_{p+1}(K)}\frac{1}{2}\|g_K^\pm-b_K^\pm(w_K,\cdot\,)\|_{1,K,*}^2\,,
\end{align}
where $\|\cdot\|_{1,K,*}$ denotes the discrete dual\footnote{Here, $\langle\cdot,\cdot\rangle$ denotes the duality pairing between $\CP_{p+2}(K)$ and $(\CP_{p+2}(K))^*$.} norm of an element in $(\CP_{p+2}(K))^*$, that is, for $\ell\in (\CP_{p+2}(K))^*$
\begin{align*}
    \|\ell\|_{1,K,*}\eq\sup_{v_h\in \CP_{p+2}(K)}\frac{|\langle\ell,v_h\rangle|}{\|v_h\|_{1,K}}\,;
\end{align*}
while $g_K^\pm$ and $b_K^\pm$ are defined by 
\begin{align}
    b_K^\pm(w,v)\eq(\grad w,\grad v)_K\pm\omega^2(w,v)_K\qquad\text{and}\qquad g_K^\pm(v)\eq-(\bq_h,\grad v)_K\pm\omega^2(u_h,v)_K,
\end{align}
for $w,v\in H^1(K)$ and $K\in\CT_h$.

We notice that solving \eqref{eq:resmin} is equivalent to find $ (r_h^\pm,\nu_h^\pm)\in \CP_{p+2}(\CT_h)\times\CP_{p+1}(\CT_h) $ such that, for each $K\in\CT_h$, $r_K^\pm\eq r_h^\pm|_K$ and $\nu_K^\pm\eq\nu_h^\pm|_K$ solve the local linear saddle-point system (see \cite{CohDahWelM2AN2012}):
\begin{subequations}\label{eq:saddlepoint}
	\begin{align}
 (r_K^\pm,v_K)_{1,K}+b_K^\pm(\nu_K^\pm,v_K)= & \, g_K^\pm(v_K)
		&&\hspace{-2.5cm} 
		\forall v_K\in \CP_{p+2}(K),\label{eq:saddlepoint_a}\\
		b_K^\pm(w_K,r_K^\pm)\hspace{2.25cm}= & \,\, 0
		&&\hspace{-2.5cm} 
  \forall w_K\!\in\CP_{p+1}(K).\label{eq:saddlepoint_b}	
  \end{align}
\end{subequations}

The definitions of $b_K^\pm$ and $g_K^\pm$ arise from different motivations, which we detail below:
\begin{enumerate}
    \item[i)] Computing a local Helmholtz problem with $b_K^-(w,v)\eq(\grad w,\grad v)_K-\omega^2(w,v)_K$ and $g_K^-(v)\eq-(\bq_h,\grad v)_K-\omega^2(u_h,v)_K$, which is a slight modification of the one proposed in \cite{zhu2021preasymptotic},
    \item[ii)] Computing a generalization of the postprocessing scheme in \cite{muga_rojas_vega_2022a}, originally proposed by \cite{stenberg_1991}, to deal with a frequency-dependent full $H^1$-norm of the error for $u$ by considering $b_K^+(w,v)\eq(\grad w,\grad v)_K+\omega^2(w,v)_K$ and $g_K^+(v)\eq-(\bq_h,\grad v)_K+\omega^2(u_h,v)_K$ (see \cite{araya_solano_vega_2019a} for a similar approach applied to the Brinkman problem).
\end{enumerate}

The main contributions of this work are a posteriori error estimates for HDG discretizations of the Helmholtz problem, which are stated in terms of our superconvergent postprocessing approximations and a posteriori error indicators computed through a discrete dual residual minimization approach. When we refer to error, we mean the error associated with $(\bq_h,\nu_h^\pm)$ rather than $(\bq_h,u_h)$, so it is natural to ask about the (super)convergence through an a priori error analysis, for the error when considering the postprocessed approximation.

The following result provides a priori error estimates for our postprocessed approximations:

\begin{theorem}[A priori error estimates for the postprocessed approximations]\label{thm:apriori_secondpostpm}

    Let $ (u,\bq) $ be the solution of \eqref{helmholtz}, let $(u_h,\bq_h)\in\CP_p(\CT_h)\times\BCP_p(\CT_h)$ be the solution of \eqref{eq:HDG_noextrarhs}, and let $\nu_h^\pm\in\CP_{p+1}(\CT_h)$ be defined by \eqref{eq:saddlepoint}. If $u\in H^s(\Omega)$, then the following estimate holds for all $h\in(0,\pi/\omega)$:
    \begin{align}\label{eq:apriori_mr_post_fullnorm}
    \|u-\nu_h^\pm\|_{1,J,\CT_h}\lesssim((\omega h)^{r+1}+\omega(\omega h)^{r+2})C_u,
    \end{align}
    where $C_u\eq\omega^{-(r+1)}|u|_{r+2,\Omega}$ and $r=\min\{s-2,p\}$. Additionally, if $\Omega$ is convex, we have
    \begin{align}\label{eq:apriori_mr_post_L2}
        \omega\|u-\nu_h^\pm\|_{\CT_h}\lesssim((\omega h)^{r+2}+\omega(\omega h)^{r+2})C_u.
    \end{align}

Moreover, \eqref{eq:apriori_mr_post_fullnorm}  and \eqref{eq:apriori_mr_post_L2} hold for all $h>0$ in the case of $\nu_h^+$.
\end{theorem}
 
Using the solution to \eqref{eq:saddlepoint}, we define the (a posteriori) error indicators $(\eta_h^\pm)^2\eq\sum_{K\in\CT_h}(\eta_K^\pm)^2$,
where
\begin{align}\label{eq:eta}
    (\eta_K^\pm)^2\eq&\ \|r_h^\pm\|_{1,K}^2+\|\bq_h+\grad\nu_h^\pm\|_K^2+\omega^2\|u_h-\nu_h^\pm\|_K^2.
\end{align}

We also introduce the following notation for the error $(\mathrm{e}_h^\pm)^2\eq\sum_{K\in\CT_h}(\mathrm{e}_K^\pm)^2$, where
\begin{align*}
    (\mathrm{e}_K^\pm)^2\eq\|u-\nu_h^\pm\|_{1,J,K}^2+\omega^2\|u-u_h\|_K^2+\|\bq-\bq_h\|_K^2.
\end{align*}

Now, before stating our reliability result, we introduce a key assumption:

\begin{assumption}[Saturation]\label{ass:saturation}
    Let $(u,\bq) \in H^1(\Omega) \times \BH(\ddiv,\Omega)$ solve \eqref{helmholtz}, let $ (r_h^\pm,\nu_h^\pm)\in \CP_{p+2}(\CT_h)\times \CP_{p+1}(\CT_h) $ solve \eqref{eq:saddlepoint}, and let $ \theta_h^\pm\in\CP_{p+2}(\CT_h) $ solve
    \begin{align}\label{eq:aux_problem}
    b_K^\pm(\theta_h^\pm,v_K)=g_K^\pm(v_K)\qquad\forall v_K\in\CP_{p+2}(K),
\end{align}
for all $K\in\CT_h$. There exists a real number $ \delta\in[0,1) $, uniform with respect to $ h $, such that $$ \|u-\theta_h^\pm\|_{1,\CT_h}\leq\delta\|u-\nu_h^\pm\|_{1,\CT_h}. $$
\end{assumption}

Now, we can state our a posteriori error estimates, reliability, and efficiency in the following two results:

\begin{theorem}[Reliability]\label{rel}
Let $(u,\bq) \in H^1(\Omega) \times \BH(\ddiv,\Omega)$ solve \eqref{helmholtz}, let $(u_h,\bq_h) \in \CP_p(\CT_h) \times \BCP_p(\CT_h)$ solve \eqref{eq:HDG_noextrarhs}, and let $(r_h^\pm,\nu_h^\pm)\in \CP_{p+2}(\CT_h)\times \CP_{p+1}(\CT_h) $ solve \eqref{eq:saddlepoint}. 
If Assumption \ref{ass:saturation} is satisfied, then the following estimate holds for all $h>0$:
	\begin{align}\label{eq:reliability}
		\mathrm{e}_h^\pm\lesssim\frac{\sqrt{1+\omega h}}{1-\delta}\eta_h^\pm.
	\end{align}
\end{theorem}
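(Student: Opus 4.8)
The plan is to control the three groups of terms in the left-hand side of \eqref{eq:reliability} separately: the pair $\|\grad(u-\nu_h)\|_{\CT_h}$ and $\|\bq-\bq_h\|_{\CT_h}$, the interior-jump seminorm of $u-\nu_h$, and the weighted $L^2$ term $\omega\|u-\nu_h\|_{\CT_h}$. The pivotal observation I would establish first is that the auxiliary function $\theta_h$ of \eqref{eq:aux_problem} satisfies $\theta_h=\nu_h+\varepsilon_h$. Indeed, for each $K\in\CT_h$ the first saddle-point equation \eqref{eq:saddlepoint_a} reads $(\grad(\nu_K+\varepsilon_K),\grad v_K)_K=-(\bq_h,\grad v_K)_K$ for all $v_K\in\Xi_K$, which is precisely the equation defining $\theta_h$ on $K$; moreover both $\theta_h$ and $\nu_h+\varepsilon_h$ share the element average $(\cdot,1)_K=(u_h,1)_K$, since $\varepsilon_K\in\Xi_K$ has zero mean. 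As the gradient of a function in $\mathbb{P}_{k+2}(K)\cap L^2_0(K)$ is uniquely determined by testing against $\grad\Xi_K$, this forces $\theta_h=\nu_h+\varepsilon_h$, hence $\grad(\theta_h-\nu_h)=\grad\varepsilon_h$ elementwise.

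With this identity the gradient bound is a one-line consequence of the saturation Assumption~\ref{eq:saturation}. By the triangle inequality and Assumption~\ref{eq:saturation},
\[
\|\grad(u-\nu_h)\|_{\CT_h}\le\|\grad(u-\theta_h)\|_{\CT_h}+\|\grad(\theta_h-\nu_h)\|_{\CT_h}\le\delta\|\grad(u-\nu_h)\|_{\CT_h}+\|\grad\varepsilon_h\|_{\CT_h},
\]
so that $\|\grad(u-\nu_h)\|_{\CT_h}\le(1-\delta)^{-1}\|\grad\varepsilon_h\|_{\CT_h}\le(1-\delta)^{-1}\eta$, the last step because $\|\grad\varepsilon_h\|_{\CT_h}^2$ is a summand of $\eta^2$. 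For the flux I would use the exact algebraic relation $\bq-\bq_h=-\grad(u-\nu_h)-(\bq_h+\grad\nu_h)$, which follows from $\bq=-\grad u$; taking $L^2(K)$ norms and summing gives $\|\bq-\bq_h\|_{\CT_h}\le\|\grad(u-\nu_h)\|_{\CT_h}+\|\bq_h+\grad\nu_h\|_{\CT_h}\lesssim\eta$, since the second term is also a summand of $\eta^2$. This disposes of the $\|\bq-\bq_h\|_{\CT_h}^2$ and $\|\grad(u-\nu_h)\|_{\CT_h}^2$ parts. The interior-jump contribution is then free: because $\bq\in\BH(\ddiv,\Omega)$ and $\grad u=-\bq$ we have $u\in H^1(\Omega)$, so $\jmp{u}=0$ on each interior face and $\jmp{u-\nu_h}=-\jmp{\nu_h}$; therefore $\tfrac12\sum_{F\in\CF_h^{\rm I}}\omega^2h_F\|\jmp{u-\nu_h}\|_F^2$ is exactly the jump summand of $\eta^2$.

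It remains to treat $\omega\|u-\nu_h\|_{\CT_h}$, which is where the prefactor $(1+\omega h_K)$ originates. I would split $u-\nu_h$ on each $K$ into its mean $\overline{v}_K:=|K|^{-1}(v,1)_K$ and the complementary mean-free part. The mean-free part is handled by the elementwise Poincaré inequality $\|w-\overline{w}_K\|_K\le\pi^{-1}h_K\|\grad w\|_K$, giving a contribution $\omega^2\|(u-\nu_h)-\overline{(u-\nu_h)}_K\|_K^2\le\pi^{-2}(\omega h_K)^2\|\grad(u-\nu_h)\|_K^2$, already of size $(\omega h_K)^2\eta^2$. For the averages I would exploit the local conservation built into the HDG flux: testing the second equation of \eqref{eq:HDG_noextrarhs} with the function equal to $1$ on $K$ and $0$ elsewhere, and subtracting the integral of \eqref{eq:helm2} over $K$, yields
\[
\omega^2(u-u_h,1)_K=\langle(\bq-\widehat{\bq}_h)\cdot\bn,1\rangle_{\partial K},
\]
and since $(\nu_h,1)_K=(u_h,1)_K$ the same identity holds with $u_h$ replaced by $\nu_h$; this expresses $\omega^2\overline{(u-\nu_h)}_K$ through the normal-flux mismatch on $\partial K$, which is in turn controlled by $\|\bq-\bq_h\|$ and by the stabilization jumps $\tau(u_h-\widehat{u}_h)$, quantities already dominated by $\eta$. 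Collecting both parts and tracking the powers of $\omega$ and $h$ gives $\omega^2\|u-\nu_h\|_{\CT_h}^2\lesssim(1+\omega h_K)^2\eta^2$, and adding the three groups yields \eqref{eq:reliability}.

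I expect the genuine difficulty to lie entirely in this last, $L^2$, estimate. The gradient and flux bounds are essentially algebraic once the identity $\theta_h=\nu_h+\varepsilon_h$ is in place and saturation is invoked, and the jump term is obtained at no cost. By contrast, controlling $\omega\|u-\nu_h\|_{\CT_h}$ with the sharp prefactor requires pinning the piecewise element averages \emph{without} appealing to elliptic regularity of $u$, which is only assumed to lie in $H^1(\Omega)$ here; the delicate point is to convert the local conservation identity into a bound by $\eta$ with the correct powers of $\omega$ and $h$, and in particular to anchor the global additive constant left undetermined by the per-element Poincaré inequality. I anticipate that this anchoring must come from the absorbing boundary condition \eqref{eq:helm4}, equivalently from a duality argument based on the dual problem \eqref{eq:DualProblem} and its regularity estimate \eqref{regularity dual problem}, and that this is where the wavenumber dependence has to be handled with most care.
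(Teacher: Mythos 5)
Your proposal follows the paper's proof almost verbatim on three of the four pieces of the error, and then diverges---without closing the argument---on exactly the fourth. The identity $\theta_h=\nu_h+\varepsilon_h$ you establish first is correct: the paper's Lemma~\ref{lemma_rel} records only its consequence $\|\grad(\theta_h-\nu_h)\|_K=\|\grad\varepsilon_K\|_K$ (proved by a sup-over-$\Xi_K$ computation), but your stronger elementwise statement follows just as you say, since the difference lies in $\Xi_K$, has gradient orthogonal to $\grad\Xi_K$, and has zero mean. From there, your bound on $\|\grad(u-\nu_h)\|_{\CT_h}$ (saturation plus triangle inequality), your flux bound via $\bq-\bq_h=-\grad(u-\nu_h)-(\bq_h+\grad\nu_h)$, and your jump identity using $\jmp{u}=0$ are precisely the paper's steps \eqref{rel_potential}, \eqref{eq:reliability_prev3_alt}, and \eqref{eq:reliability_jump}.

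The genuine gap is the weighted $L^2$ term $\omega\|u-\nu_h\|_{\CT_h}$. The paper dispatches it in one line by applying its Lemma~\ref{lemma_upperbound_L2} elementwise to $w=u-\nu_h$, i.e.\ $\omega\|u-\nu_h\|_K\leq\omega h_K\|\grad(u-\nu_h)\|_K$ under the mesh restriction $h_K<d^{\,1/d}$; this is the sole source of the prefactor $(1+\omega h_K)$, with no mean/mean-free splitting, no local conservation, and no duality. Your alternative route never closes: the decisive claim that $\langle(\bq-\widehat{\bq}_h)\cdot\bn,1\rangle_{\partial K}$ is ``controlled by $\|\bq-\bq_h\|$ and by the stabilization jumps $\tau(u_h-\widehat{u}_h)$, quantities already dominated by $\eta$'' is unsupported---the estimator \eqref{eq:eta} contains no stabilization term $\tau(u_h-\widehat{u}_h)$ and no trace of $\bq-\bq_h$; moreover $\langle(\bq-\bq_h)\cdot\bn,1\rangle_{\partial K}=(\div(\bq-\bq_h),1)_K$, so your conservation identity merely trades the unknown element averages for divergence errors that $\eta$ also does not control. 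Your closing paragraph concedes the point by deferring the ``anchoring'' of the averages to a duality argument that is never carried out, so the proof of \eqref{eq:reliability} is incomplete exactly where the factor $(1+\omega h_K)$ must be produced. For what it is worth, your instinct that a per-element Poincar\'e inequality cannot by itself pin down element averages is sound: the paper's Lemma~\ref{lemma_upperbound_L2}, as stated for all $w\in H^1(K)$, fails for nonzero constant functions, so the paper's own handling of this step deserves scrutiny---but that observation does not fill the hole in your argument.
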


\begin{theorem}[Efficiency]\label{eff}
	Let $(u,\bq) \in H^1(\Omega) \times \BH(\ddiv,\Omega)$ solve \eqref{helmholtz}, let $(u_h,\bq_h) \in \CP_p(\CT_h) \times \BCP_p(\CT_h)$ solve \eqref{eq:HDG_noextrarhs}, and let $(r_h^\pm,\nu_h^\pm)\in \CP_{p+2}(\CT_h)\times \CP_{p+1}(\CT_h) $ solve \eqref{eq:saddlepoint}. Then, for all $ K\in\CT_h$ and all $h>0$,
    the following estimate holds:
	\begin{align}\label{efficiency_alt}
		\eta_K^\pm\leq&\ \sqrt{6}\,\mathrm{e}_K^\pm.
	\end{align} 
\end{theorem}

The remainder of this section will be devoted to the proofs of the Theorems \ref{thm:apriori_secondpostpm}, \ref{rel}, and \ref{eff}.

\begin{remark}
    We recall that $b_K^\pm$ and $g_K^\pm$ and, as a consequence, our postprocessed approximations and error indicators can be computed starting with any HDG approximation $(u_h,\bq_h)$, even when here we choose one, in particular, \cite{zhu2021preasymptotic}, to illustrate and due to its optimal convergence rates.
\end{remark}

\subsection{Well-posedness and a priori error estimates}\label{aprioripp}
 
Since our a posteriori error estimates are stated in terms of an error involving $\|u-\nu_h^\pm\|_{1,J,\CT_h}$, it is natural that questions arise regarding the well-posedness of the postprocessed approximations and the convergence of that error term. The following result addresses the well-posedness of each problem defined by \eqref{eq:saddlepoint} and the stability estimates for $(r_h^\pm,\nu_h^\pm)$ in terms of the HDG approximation $(u_h,\bq_h)$.
\begin{theorem}[Well-posedness and stability estimates]
    Let $(u_h,\bq_h)\in\CP_p(\CT_h)\times\BCP_p(\CT_h)$ be the solution of \eqref{eq:HDG_noextrarhs}. Then, there exist a unique $(r_h^\pm,\nu_h^\pm)\in\CP_{p+2}(\CT_h)\times\CP_{p+1}(\CT_h)$ solution to \eqref{eq:saddlepoint}, and the following estimates hold for each $K\in\CT_h$:
    \begin{align*}
        \|r_h^\pm\|_{1,K}\leq \|\bq_h\|_K+\omega\|u_h\|_K\qquad\text{and}\qquad\|\nu_h^\pm\|_{1,K}\leq c_{\mathrm{stab},K}\left(\|\bq_h\|_K+\omega\|u_h\|_K\right),
    \end{align*}
    where $c_{\mathrm{stab},K}\eq\begin{cases}
        2&\text{ for }\nu_h^+,\\
        \displaystyle 2\!\left(1+\left(\frac{\omega h_K}{\pi}\right)^{\!2}\right)\!\!\left(1-\left(\frac{\omega h_K}{\pi}\right)^{\!2}\right)^{\!-1}&\text{ for }\nu_h^-.
    \end{cases}$
\end{theorem}
\begin{proof}
    The result for $(r_h^+,\nu_h^+)$ follows from the coercivity of $(\cdot,\cdot)_{1,K}$ and $b_K^+(\cdot,\cdot)$ on $(\CP_{p+2}(K),\|\cdot\|_{1,K})$ via Babu\v{s}ka-Brezzi Theorem \cite[Theorem 49.13]{ern_guermond_2021b}. For $(r_h^-,\nu_h^-)$, we decompose\footnote{For $v\in\CP_k(K)$, $k\in\mathbb{N}$, we write $v=v_\star+\overline{v}$, with $\overline{v}:=\frac{1}{|K|}\int_K v\in\CP_0(K)$ and $v_\star:=v-\overline{v}\in\CP_k^\star(K)$.} test spaces from \eqref{eq:saddlepoint_a} and \eqref{eq:saddlepoint_b} to obtain \eqref{eq:saddlepoint_a} tested with $\CP_{p+2}^\star(K)$ and $\CP_0(K)$, and \eqref{eq:saddlepoint_b} tested with $\CP_{p+1}^\star(K)$ and $\CP_0(K)$, respectively. From \eqref{eq:saddlepoint_b} tested with $\CP_0(K)$, we deduce that $r_K^+\in\CP_{p+2}^\star(K)$, and inserting this into \eqref{eq:saddlepoint_a} tested with $\CP_0(K)$, we deduce that $(\nu_h^+,1)_K=(u_h,1)_K$. Thus, we rewrite \eqref{eq:saddlepoint} equivalently as
    \begin{subequations}\label{eq:saddlepoint_equiv}
	\begin{align}
 (r_K^-,v_K)_{1,K}+b_K^-(\nu_{\star,K}^-,v_K)= & \, g_K^-(v_K)
		&&\hspace{-2.5cm} 
		\forall v_K\in \CP_{p+2}^\star(K),\label{eq:saddlepoint_a_equiv}\\
		b_K^-(w_K,r_K^-)\hspace{2.5cm}= & \,\, 0
		&&\hspace{-2.5cm} 
  \forall w_K\!\in\CP_{p+1}^\star(K),\label{eq:saddlepoint_b_equiv}
  \end{align}
  where we recover $\nu_h^-$ from $\nu_{\star,h}^-$ as $\nu_h^-=\nu_{\star,h}^-+\frac{1}{|K|}\int_K  u_h$. Here, $(\cdot,\cdot)_{1,K}$ is clearly coercive on the space $(\CP_{p+2}(K),\|\cdot\|_{1,K})$ and for $b_K^-(\cdot,\cdot)$, thanks to Poncar\'e inequality $\displaystyle\|v\|_K\leq\frac{h_K}{\pi}\|\grad v\|_K$ for all $v\in H_\star^1(K)$, it is clear that\footnote{Here, $H^1_\star(K)\eq\{w\in H^1(K):(w,1)_K=0\}$ and $\CP_k^\star(K)\eq\CP_k(K)\cap H^1_\star(K)$, for $k\in\mathbb{N}$.}
\begin{align*}
    (\grad v,\grad v)_K-\omega^2(v,v)_K\geq\!\left(1-\left(\frac{\omega h_K}{\pi}\right)^{\!\!2}\,\right)\!\!\|\grad v\|_K^2\geq\!\left(1-\left(\frac{\omega h_K}{\pi}\right)^{\!\!2}\,\right)\!\!\left(1+\left(\frac{\omega h_K}{\pi}\right)^{\!\!2}\,\right)^{\!\!-1}\!\|v\|_{1,K}^2,
\end{align*}
for all $v\in\CP_{p+2}^\star(K)$. Thus, to ensure coercivity on $(\CP_{p+2}^\star(K),\|\cdot\|_{1,K})$, we require $\omega h_K<\pi$ for all $K\in\mathcal{T}_h$. Finally, if $\omega h_K<\pi$, for all $K\in\mathcal{T}_h$, the result follows, for $(r_h^-,\nu_h^-)$, thanks to Babu\v{s}ka-Brezzi Theorem.
\end{subequations}
\end{proof} 

To prove the a priori error estimate \eqref{eq:apriori_mr_post_fullnorm}, stated in Theorem \ref{thm:apriori_secondpostpm}, we start by stating and proving an upper bound for the $\|\cdot\|_{1,K}$ part of the error, as well as an upper bound for the $\omega\|\cdot\|_K$ to prove \eqref{eq:apriori_mr_post_L2}, emphasizing their dependence on the a priori error estimates for the HDG discretization:
\begin{lemma}\label{lemma:apriori_secondpostpm}
    Let $ (u,\bq) $ be the solution of \eqref{helmholtz}, let $(u_h,\bq_h)\in\CP_p(\CT_h)\times\BCP_p(\CT_h)$ be the solution of \eqref{eq:HDG_noextrarhs}, and let $\nu_h^\pm\in\CP_{p+1}(\CT_h)$ be defined by \eqref{eq:saddlepoint}. Then, the following estimates hold for all $K\in\CT_h$ and all $h\in(0,\pi/\omega)$:
    \begin{align}
    \|u-\nu_h^\pm\|_{1,K}\lesssim&\,(1+\omega h_K)\!\left(\omega\|u-u_h\|_K+\|\bq-\bq_h\|_K+\left\|\grad\!\left(u-Q_K^{p+1}u|_K\right)\right\|_K\right)\label{eq:apriori_fullH1_mr_post_lemma}\\
        &\,+\omega\!\left\|\varepsilon_h^u\right\|_K+\omega\!\left\|u-Q_K^{p+1}u|_K\right\|_K,\nonumber\\
         \omega\|u-\nu_h^\pm\|_K\lesssim&\,\omega h_K\!\left(\omega\|u-u_h\|_K+\|\bq-\bq_h\|_K+\left\|\grad\!\left(u-Q_K^{p+1}u|_K\right)\right\|_K\right)\label{eq:apriori_L2_mr_post}\\
        &\,+\omega\!\left\|\varepsilon_h^u\right\|_K+\omega\!\left\|u-Q_K^{p+1}u|_K\right\|_K.\nonumber
    \end{align}
    Moreover, \eqref{eq:apriori_fullH1_mr_post_lemma} and \eqref{eq:apriori_L2_mr_post} hold for all $h>0$ in the case of $\nu_h^+$.
\end{lemma}

Due to the relationship between the bilinear forms $b_K^\pm$ and the inner product $(\cdot,\cdot)_{1,K}$, that is, whether or not they are equal to each other (see \cite[Proposition 3.1]{muga_rojas_vega_2022a}), we will address the proof of Lemma \ref{lemma:apriori_secondpostpm} for each case separately.

\begin{proof}[Proof of Lemma \ref{lemma:apriori_secondpostpm} for $\nu_h^-$]
    Let $ v\in\CP_{p+1}^\star(\CT_h)\eq\{w_h\in\CP_{p+1}(\CT_h):(w_h,1)_K=0\quad\forall K\in\CT_h\}$ be defined through $ v|_K=(I-Q_K)\big(Q_K^{p+1}u|_K-\nu_h^-|_K\big) $, for each $ K\in\CT_h $. To start, if we consider \eqref{eq:saddlepoint_a}, with $v_K\in\CP_{p+1}(K)\subset\CP_{p+2}(K)$, and \eqref{eq:saddlepoint_b}, we obtain
    \begin{align}\label{aux_apriori_alt}
        2\omega^2(r_h^-,v_K)_K+(\grad\nu_h^-,\grad v_K)_K&-\omega^2(\nu_h^-,v_K)_K\nonumber\\
        &=-(\bq_h,\grad v_K)_K-\omega^2(u_h,v_K)_K\quad\forall v_K\in\CP_{p+1}(K).
    \end{align}
    Thanks to Lemma \ref{lemma_eff_alt} and \eqref{aux_apriori_alt}, it follows that
    \begin{align}
		\|\grad v\|_K^2-\omega^2\|v\|_K^2&\leq\left(\left\|\grad\big(u-Q_K^{p+1}u|_K\big)\right\|_K+\|\bq-\bq_h\|_K\right)\!\|\grad v\|_K\nonumber\\
		&\qquad+\omega^2\!\left(\left\|u-Q_K^{p+1}u|_K\right\|_K+\|u-u_h\|_K+2\| r_h^-\|_K\right)\!\|v\|_K\nonumber\\
            &\leq\left(\left\|\grad\big(u-Q_K^{p+1}u|_K\big)\right\|_K+\|\bq-\bq_h\|_K\right)\!\|\grad v\|_K\label{eq:bound_v_alt}\\
		&\quad\ +\omega^2\!\left(\left\|u-Q_K^{p+1}u|_K\right\|_K+\|u-u_h\|_K\right)\!\|v\|_K\nonumber\\
            &\quad\ +2\omega\left(\|\grad(u-\nu_h^-)\|_K+\omega\|u-\nu_h^-\|_K+\|\bq-\bq_h\|_K+\omega\|u-u_h\|_K\right)\!\|v\|_K.\nonumber
    \end{align}
    Since $ v|_K\in\CP_{p+1}^\star(K) $, using \eqref{eq:bound_v_alt} and the Poincar\'e inequality \cite{Poincare_ndim,PaWe1960}, we conclude that
    	\begin{align}\label{eq:bound_v_final_alt}
		\left(1-\frac{(\omega h_K)^2}{\pi^2}\right)\!\|\grad v\|_K^2&\leq\|\grad v\|_K^2-\omega^2\|v\|_K^2\\
		&\leq\left(\left\|\grad\big(u-Q_K^{p+1}u|_K\big)\right\|_K+\frac{1}{\pi}(\omega h_K)\,\omega\!\left\|u-Q_K^{p+1}u|_K\right\|_K\right.\nonumber\\
        &\qquad\left.+\left(1+\frac{2}{\pi}\omega h_K\right)\!\|\bq-\bq_h\|_K+\frac{3}{\pi}(\omega h_K)\,\omega\|u-u_h\|_K\right.\nonumber\\
        &\qquad\left.+\frac{2}{\pi}\omega h_K\|\grad(u-\nu_h^-)\|_K+\frac{2}{\pi}(\omega h_K)\,\omega\|u-\nu_h^-\|_K\right)\!\|\grad v\|_K,\nonumber
        \end{align}
	and then
        \begin{align}
		\omega\|v\|_K&\leq \omega\frac{h_K}{\pi}\|\grad v\|_K\label{eq:bound_v_final_alt_2}\\
		&\leq\left(1-\frac{(\omega h_K)^2}{\pi^2}\right)^{\!-1}\!\frac{\omega}{\pi}h_K\!\left(\left\|\grad\big(u-Q_K^{p+1}u|_K\big)\right\|_K+\frac{1}{\pi}(\omega h_K)\,\omega\!\left\|u-Q_K^{p+1}u|_K\right\|_K\right.\nonumber\\
        &\left.\hspace{4.5cm}+\left(1+\frac{2}{\pi}\omega h_K\right)\!\|\bq-\bq_h\|_K+\frac{3}{\pi}(\omega h_K)\,\omega\|u-u_h\|_K\right.\nonumber\\
        &\left.\hspace{4.5cm}+\frac{2}{\pi}\omega h_K\|\grad(u-\nu_h^-)\|_K+\frac{2}{\pi}(\omega h_K)\,\omega\|u-\nu_h^-\|_K\right)\!.\nonumber
	\end{align}
    Let us consider the splitting
    \begin{align*}
        (u-\nu_h^-)|_K=v|_K+\big(u|_K-Q_K^{p+1}u|_K\big)+Q_K\big(Q_K^{p+1}u|_K-\nu_h^-|_K\big),
    \end{align*}
    for all $K\in\CT_h$. On the one hand, thanks to \eqref{eq:bound_v_final_alt}, we have that
    \begin{align*}
        \|\grad(u-\nu_h^-)\|_K&\leq\|\grad v\|_K+\left\|\grad(u-Q_K^{p+1}u)\right\|_K\\
        &\leq\left(1-\frac{(\omega h_K)^2}{\pi^2}\right)^{\!-1}\!\!\left(\frac{2}{\pi}\omega h_K\|\grad(u-\nu_h^-)\|_K+\frac{2}{\pi}(\omega h_K)\,\omega\|u-\nu_h^-\|_K\right.\\
        &\left.\hspace{3.75cm}+\left(1+\frac{3}{\pi}\omega h_K\right)\!T_K\right)\\
        &\quad\ +\left\|\grad(u-Q_K^{p+1}u|_K)\right\|_K
    \end{align*}
    with
    \begin{align*}
        T_K\eq\left\|\grad\big(u-Q_K^{p+1}u|_K\big)\right\|_K+\omega\!\left\|u-Q_K^{p+1}u|_K\right\|_K+\|\bq-\bq_h\|_K+\omega\|u-u_h\|_K.
    \end{align*}
    Assuming $\omega h_K\leq c$, with $c\in(0,\pi)$, it follows that
    \begin{align*}
        \|\grad(u-\nu_h^-)\|_K\leq&\,\frac{2\pi c}{\pi^2-c^2}\|\grad(u-\nu_h^-)\|_K+\frac{2\pi c}{\pi^2-c^2}\omega\|u-\nu_h^-\|_K+\frac{\pi^2}{\pi^2-c^2}\!\left(1+\frac{3}{\pi}\omega h_K\right)\!T_K\\
        &\,+\left\|\grad(u-Q_K^{p+1}u|_K)\right\|_K,
    \end{align*}
    and choosing $c=(\sqrt{5}-2)\pi$, we obtain\footnote{Our choose of $c$ implies that the constant multiplying $\|\grad(u-\nu_h^-)\|_K$ in the right-hand side takes the value $1/2$.}
    \begin{align}
        \|\grad(u-\nu_h^-)\|_K\leq \omega\|u-\nu_h^-\|_K+\frac{\pi}{c}\!\left(1+\frac{3}{\pi}\omega h_K\right)\!T_K+2\left\|\grad(u-Q_K^{p+1}u|_K)\right\|_K.\label{eq:bound_semi_apriori}
    \end{align}
    On the other hand, thanks to \eqref{eq:bound_v_final_alt_2}, the bound $\omega h_K\leq c$, and \eqref{eq:bound_semi_apriori}, we obtain
    \begin{align*}
        \omega\|u-\nu_h^-\|_K\leq&\ \frac{c}{\pi}\omega\|u-\nu_h^-\|_K+\omega\!\left(1+\frac{3}{\pi}\omega h_K\right)\!h_K T_K+\omega\!\left\|u-Q_K^{p+1}u|_K\right\|_K\\
        &\ +\omega\!\left\|Q_K\!\left(Q_K^{p+1}u|_K-\nu_h^-|_K\right)\right\|_K,
    \end{align*}
    and thus
    \begin{align}
        \omega\|u-\nu_h^-\|_K\leq&\left(1-\frac{c}{\pi}\right)^{-1}\!\left(\omega\!\left(1+\frac{3}{\pi}\omega h_K\right)\!h_K T_K+\omega\!\left\|u-Q_K^{p+1}u|_K\right\|_K\right.\label{eq:bound_L2_apriori}\\
        &\hspace{2.1cm}\left.+\omega\!\left\|Q_K\!\left(Q_K^{p+1}u|_K-\nu_h^-|_K\right)\right\|_K\right).\nonumber
    \end{align}
    Testing \eqref{eq:saddlepoint} with $v_K=w_K\in\CP_0(K)$, we deduce that $Q_K\nu_h^-=Q_K u_h$, and thanks to \eqref{eq:averages} we have that $ Q_K Q_K^{p+1}u|_K=Q_K \Pi_{W}u|_K $. Thus, by using the boundedness of $Q_K$, we conclude that
    \begin{align}
        \left\|Q_K\!\left(Q_K^{p+1}u|_K-\nu_h^-|_K\right)\!\right\|_K=\left\|Q_K\!\left(\Pi_{W}u|_K-u_h|_K\right)\right\|_K\leq\left\|\Pi_{W}u|_K-u_h\right\|_K,\label{eq:bound_QK}
    \end{align}
    and, as a consequence of \eqref{eq:bound_semi_apriori}, \eqref{eq:bound_L2_apriori} and \eqref{eq:bound_QK}, we get
    \begin{align}
        \omega\|u-\nu_h^\pm\|_K\lesssim&\,\omega h_K\!\left(\omega\|u-u_h\|_K+\|\bq-\bq_h\|_K+\left\|\grad\!\left(u-Q_K^{p+1}u|_K\right)\right\|_K\right)\label{eq:apriori_L2_mr_post_lemma}\\
        &+\omega\!\left\|\varepsilon_h^u\right\|_K+\omega\!\left\|u-Q_K^{p+1}u|_K\right\|_K,\nonumber
    \end{align}
    and \eqref{eq:apriori_L2_mr_post} follows. Finally, \eqref{eq:apriori_fullH1_mr_post_lemma} follows from \eqref{eq:bound_semi_apriori} and \eqref{eq:apriori_L2_mr_post_lemma}.
\end{proof}

For $u-\nu_h^+$, we test \eqref{eq:saddlepoint_a} with $v_K\in\CP_{p+1}(K)$ and use \eqref{eq:saddlepoint_b} together with the fact that $b_K^+(\cdot,\cdot)\equiv(\cdot,\cdot)_{1,K}$, to conclude that
\begin{align}\label{eq:aux_Stenberg+}
    b_K^+(\nu_K^+,v_K)=g_K^+(v_K)\qquad\forall v_K\in\CP_{p+1}(K).
\end{align}
Furthermore, we can even prove that the solution to \eqref{eq:aux_Stenberg+} and the solution to the saddle-point problem \eqref{eq:saddlepoint}, for an appropriate $r_K^+$, are the same (see \cite[Proposition 3.1]{muga_rojas_vega_2022a}). The estimate for $u-\nu_h^+$ can be proved with arguments similar to those of \cite[Theorem 2.5]{muga_rojas_vega_2022a}, with the norm induced by $(\cdot, \cdot)_{1,K}$ in the role of the $H^1$-seminorm, so we omit its proof.

Now, to complete the auxiliary results that we need to prove Theorem \ref{thm:apriori_secondpostpm}, we state and prove upper bounds for the jump terms in the error:

\begin{lemma}[Upper bounds for the jump error terms]\label{lemma:apriori_secondpostpm_jumps}
    \begin{align}
        h_F^{-1/2}\|\jmp{u-\nu_h^\pm}\|_F\lesssim&\,\sum_{K\in\CT_F}\left(\|\bq-\bq_h\|_K+\|\grad(u-\nu_h^\pm)\|_K\right)\label{eq:apriori_jumpID_post}
        \intertext{for all $F\in\CF_h^I\cup\CF_h^D$, and}
        \omega h_F^{1/2}\|u-\nu_h^\pm\|_F\lesssim&\,\left(\omega\|u-\nu_h^\pm\|_K\right)^{1/2}\left(\omega\|u-\nu_h^\pm\|_K+(\omega h_K)\|\grad(u-\nu_h^\pm)\|_K\right)^{1/2}\label{eq:apriori_jumpR_post}
    \end{align}
    for all $F\in\CF_h^R$ and $K\in\CT_F$.
\end{lemma}

\begin{proof}
    To prove \cref{eq:apriori_jumpID_post}, we introduce $P_h$, the $L^2$-orthogonal projection into the space $\CP_0(\CF_h)$. Then, thanks to \cite[Lemmas 3.4 and 3.5]{MR3167450}, we have, respectively
    \begin{align}
        h_F^{-1/2}\|P_h\jmp{\nu_h^\pm}\|_F&\lesssim\sum_{K\in\CT_F}\|\bq_h+\grad\nu_h^\pm\|_K,\label{CZL3.4}\\
        h_F^{-1/2}\|(I-P_h)\jmp{\nu_h^\pm}\|_F&\lesssim\sum_{K\in\CT_F}\|\grad(u-\nu_h^\pm)\|_K\label{CZL3.5}
    \end{align}
    for all $F\in\CF_h^I\cup\CF_h^D$, and therefore \eqref{eq:apriori_jumpID_post} follows from \eqref{CZL3.4}, \eqref{CZL3.5}, and the triangle inequality.

    Finally, \eqref{eq:apriori_jumpR_post} is a consequence of multiplying by $\omega h_F$ the multiplicative trace inequality \cite[Lemma 12.15]{ern_guermond_2021a}: \textit{for all $v\in H^1(K)$, all $K\in\CT_h$, all $F\in\CF_K$, and all $h>0$, the following estimate holds}
    \begin{align}\label{eq:mult_trace}
        \|v\|_F\lesssim\|v\|_K^{1/2}\left(h_K^{-1/2}\|v\|_K^{1/2}+\|\grad v\|_K^{1/2}\right),
    \end{align}
    with $v=u-\nu_h^\pm$, for all $F\in\CF_h^R$.
\end{proof}

We have all the ingredients to prove Theorem \ref{thm:apriori_secondpostpm}.

\begin{proof}[Proof of Theorem \ref{thm:apriori_secondpostpm}]
    Estimates \eqref{eq:apriori_mr_post_fullnorm} and \eqref{eq:apriori_mr_post_L2} are a direct consequence of the approximation properties of $Q_h^{p+1}$ (see \cite[Theorem 18.16]{ern_guermond_2021a}) together with Theorem \ref{thm:apriori}, and Lemmas \ref{lemma:apriori_secondpostpm} and \ref{lemma:apriori_secondpostpm_jumps}.
\end{proof}

It is worth mentioning that both postprocessing schemes will enjoy the same advantages as the scheme in \cite{muga_rojas_vega_2022a}, in the sense of low computational effort in their computation via parallelization thanks to the local nature of their definitions and its superconvergence for sufficiently regular exact solutions.

Regarding the relationship between the counterparts\footnote{Considering $b_K^-$ and $g_K^-$ instead of $b_K^+$ and $g_K^+$.} of \eqref{eq:aux_Stenberg+} and \eqref{eq:saddlepoint}, their solutions do not coincide. Thus, the a priori error analysis for the counterpart of \eqref{eq:aux_Stenberg+} is out of the scope of our work. Nevertheless, we have included the a priori for this case in Appendix \ref{app:apriori_minus} for completeness.

\begin{remark}[On the jump terms in the error]
    The technical results in \cite[Lemmas 3.4 and 3.5]{MR3167450} allowed us to simplify the definition of the error indicators compared to similar approaches \cite{muga_rojas_vega_2022a}. The proof of the bound \eqref{CZL3.4}, from \cite[Lemma 3.4]{MR3167450}, relies on three key ingredients
\begin{enumerate}
\item[(a)] The original discretization must contain an equation of the type
\begin{align*}
    (\bq_h, \bv_h)_{\CT_h} - (u_h, \div \bv_h)_{\CT_h} + \langle\mu_h, \bv_h \cdot \bn \rangle_{\partial \CT_h} = 0,
\end{align*}
where $\mu_h$ is a piecewise polynomial function such that, for each $F\in\CF_h$, $\mu_h|_F$ is single-valued.
\item[(b)] The local test space of the previous equation must contain the lowest-order Raviart-Thomas space.
\item[(c)] The local postprocessed approximation $\nu_h\in\CP_{p+1}(\CT_h)$ must satisfy $(\nu_h,1)_K=(u_h,1)_K$ for all $K\in\CT_h$.
\end{enumerate}
In turn, the proof of the bound \eqref{CZL3.5}, from \cite[Lemma 3.5]{MR3167450}, does not depend on the equation or the numerical method we are considering. Therefore, the jump bounds based on these results can be obtained for any discretization and postprocessing scheme that satisfies the above three conditions.

In our case, an HDG discretization of the Helmholtz equation, we have that the equation \eqref{eq:HDGa} has the form given in (a), its local test space $\BCP_p(K)$, $p\geq 1$, contains $RT_0(K)$, and \eqref{eq:saddlepoint} tested with $v_K\in\CP_0(K)$ and $w_K\in\CP_0(K)$ let us conclude (c).

In the case of \cite{muga_rojas_vega_2022a}, a BDM discretization for a diffusion problem, we see that \cite[equations (2.6a) and (3.1b)]{muga_rojas_vega_2022a} also satisfy the required conditions, so we could bound the jump terms in the error without the need to incorporate jump terms in the error indicator.
\end{remark}

\subsection{A posteriori error estimates}\label{sec:aposteriori}

This section is dedicated to the proofs of the reliability and efficiency estimates associated with our a posteriori error indicators, i.e., each indicator as an upper and a (local) lower bound, up to a multiplicative constant, to the corresponding discretization error. We start by proving Theorem \ref{rel}, which provides reliability estimates.

\begin{proof}[Proof of Theorem \ref{rel}]
	Let $\alpha_K\eq\omega h_K/\pi$. Thanks to Lemmas \ref{lemma_rel_alt} and \ref{lemma_rel_alt+}, and after summing over all $ K\in\CT_h $, we get
	\begin{equation}\label{rel_potential_aux_alt}
		\|\theta_h^\pm-\nu_h^\pm\|_{1,\CT_h}\leq c(\alpha)\|r_h^\pm\|_{1,\CT_h},
	\end{equation}
    where\footnote{We notice that $0<c^-(\alpha)\leq 2$, for $\alpha\in(0,2-\sqrt{3}]$, and $\lim_{\alpha\to 0}c^-(\alpha)=\sqrt{2}$.} $c(\alpha)\eq\begin{cases}
     c^-(\alpha)\eq\sqrt{2}\frac{\sqrt{1+\alpha^2}}{|1-\alpha|}&\text{for }\eta_h^-\\
     1&\text{for }\eta_h^+
 \end{cases}$ and $\alpha\eq\omega h/\pi\in(0,1)$.
	
	\noindent After combining \eqref{rel_potential_aux_alt} with Assumption \ref{ass:saturation} and the triangle inequality, we obtain
	\begin{equation}\label{rel_potential}
		\|u-\nu_h^\pm\|_{1,\CT_h}\leq\frac{c(\alpha)}{1-\delta}\|r_h^\pm\|_{1,\CT_h}.
	\end{equation}
	Additionally, we use the triangle inequality to get
	\begin{align}		\label{eq:reliability_prev4}
		\|\bq-\bq_h\|_K&\leq \|\bq_h+\grad\nu_h^\pm\|_K+\|\grad(u-\nu_h^\pm)\|_K,\\
		\label{eq:reliability_prev4_u}
		\omega\|u-u_h\|_K&\leq \omega\|u_h-\nu_h^\pm\|_K+\omega\|u-\nu_h^\pm\|_K,
	\end{align}
    and therefore
    \begin{align}
		\widetilde{\mathrm{e}}_h^\pm\leq\frac{\sqrt{3}c(\alpha)}{1-\delta}\eta_h^\pm,\label{rel_1}
	\end{align}
    where $(\widetilde{\mathrm{e}}_h^\pm)^2\eq\sum_{K\in\CT_h}(\widetilde{\mathrm{e}}_K^\pm)^2$ and
    \begin{align}
        (\widetilde{\mathrm{e}}_K^\pm)^2\eq\|u-\nu_h^\pm\|_{1,K}^2+\|\bq-\bq_h\|_K^2+\omega^2\|u-u_h\|_K^2.\label{eq:error_aux_local}
    \end{align}
    For the jump terms, we recall \cref{eq:apriori_jumpID_post,eq:apriori_jumpR_post} to conclude
    \begin{align}
        |u-\nu_h^\pm|_{J,\CT_h}^2\lesssim(1+\omega h)\|u-\nu_h^\pm\|_{1,\CT_h}^2+\|\bq-\bq_h\|_{\CT_h}^2\leq(1+\omega h)(\widetilde{\mathrm{e}}_h^\pm)^2.\label{rel_2}
    \end{align}
    Combining \eqref{rel_1} with \eqref{rel_2}, we deduce
    \begin{align*}
        (\mathrm{e}_h^\pm)^2=(\widetilde{\mathrm{e}}_h^\pm)^2+|u-\nu_h^\pm|_{J,\CT_h}^2\lesssim(1+\omega h)(\widetilde{\mathrm{e}}_h^\pm)^2\lesssim\frac{(1+\omega h)}{(1-\delta)^2}(\eta_h^\pm)^2,
    \end{align*}
    and the result follows.
\end{proof}

Now, we proceed with the proof of Theorem \ref{eff}, highlighting that the local nature of this result follows from the local definition of the postprocessing scheme \eqref{eq:saddlepoint} instead of using localization techniques such as, for instance, the one based on bubble functions.

\begin{proof}[Proof of Theorem \ref{eff}]
	Let be $ K\in\CT_h $. Thanks to the triangle inequality, we have
	\begin{align}
		\|\bq_h+\grad\nu_h^\pm\|_K&\leq\|\bq-\bq_h\|_K+\|\grad(u-\nu_h^\pm)\|_K,\label{eq:eta2_alt}\\
		\omega\|u_h-\nu_h^\pm\|_K&\leq\omega\|u-\nu_h^\pm\|_K+\omega\|u-u_h\|_K,\label{eq:eta4_alt}
	\end{align}
and therefore, thanks to Lemma \ref{lemma_eff_alt} and equations \eqref{eq:eta}, \eqref{eq:error_aux_local}, \eqref{eq:eta2_alt}, and \eqref{eq:eta4_alt}, we deduce
\begin{align*}
    \eta_K^\pm\leq\sqrt{6}\,\widetilde{\mathrm{e}}_K^\pm\leq\sqrt{6}\,\mathrm{e}_K^\pm,
\end{align*}
and the result follows.
\end{proof}

\section{Numerical experiments}
\label{sec:numerics}

In this section, we consider two numerical experiments in two dimensions and present the results obtained by considering several settings to validate our theoretical findings and show the performance of our schemes. We consider the HDG discretization \eqref{eq:HDG_noextrarhs} as the input of our scheme. Inspired by \cite{zhu2021preasymptotic}, we consider the stabilization function $\tau=\left(\frac{\sqrt{2}}{2}+\frac{\sqrt{3}}{64}\omega h\right)\!\omega$ in \eqref{eq:Relationfluxes} in all our experiments. The $h$-adapted meshes will be generated by an iterative procedure driven by the a posteriori error indicators \eqref{eq:eta}. A standard adaptive mesh refinement procedure considers a loop whose iteration consists of the following four modules:
\begin{align*}
    \text{SOLVE}\ \rightarrow\ \text{ESTIMATE}\ \rightarrow\ \text{MARK}\ \rightarrow\ \text{REFINE}.
\end{align*}

First, we solve \eqref{eq:saddlepoint}, which requires solving \eqref{eq:HDG_noextrarhs} previously. Next, we select one of the a posteriori error indicators $\eta^\pm_h$ defined in \eqref{eq:eta}, and apply the Dörfler marking criterion \cite{dorfler1996convergent}. Specifically, we mark elements for refinement if the cumulative sum of the local indicators $\eta^\pm_K$, sorted in decreasing order, remains bounded above by a chosen fraction--set to $0.5$ in our experiments--of the global indicator $\eta^\pm_h$. Finally, we refine the mesh using a bisection-type refinement strategy \cite{bank1983some}. To quantify the relation between each a posteriori error indicator and the corresponding discretization error, we introduce the effectivity index $$\xi_h^\pm\eq\eta_h^\pm/\mathrm{e}_h^\pm.$$ All the experiments were carried out using the mesh generator Netgen \cite{schoberl1997netgen} and the Finite Element solver NGSolve \cite{schoberl2014c++}. In all experiments, we plot the $L^2$-error vs. Nel$^{1/2}$ for $u_h$ and $\nu^\pm_h$, a comparison between the true error and the corresponding error indicator vs Nel$^{1/2}$, and the effectivity index vs. Nel, where Nel denotes the number of elements. 

\subsection{Plane wave solution}\label{numexp:plane}

As a first test case, we consider a problem with a sufficiently smooth solution to illustrate the behavior of the postprocessed approximation and the error indicators in both the asymptotic and preasymptotic regimes for uniformly refined meshes. To this end, we present the results obtained for the problem defined so that its solution coincides with the plane wave
\begin{align*}
    u(x,y)=e^{i\omega\left(x\cos(\pi/8)+y\sin(\pi/8)\right)}.
\end{align*}
The problem is set on the unit square $\Omega\eq(0,1)^2$, with a Dirichlet boundary condition imposed on $\Gamma_D \eq \{0\}\times (0,1) \cup (0,1) \times \{0\}$, and the source term $f$ and boundary data $g$ are chosen accordingly. Additionally, we consider different frequencies $\omega$ and polynomial degrees $p=1,2,3$ for the approximation.

In Figure \ref{fig:L2_unif_plane}, we show the increment in the convergence rate of the $L^2$-error for both strategies. As expected, as $\omega$ increases, more elements are required to reach the resolved regime\footnote{We speak about the unresolved regime when $\frac{\omega h}{2\pi p}>1$, and resolved regime otherwise.}, where both postprocessed approximations exhibit similar behavior. Moreover, we observe optimal convergence rates, including superconvergence of the $L^2$-error associated with the minimum residual postprocessed solution in the resolved regime for both schemes. However, we observe slight variations in the unresolved regime. Notably, the curve for $\|u - \nu_h^+\|_{\CT_h}$ consistently lies below $\|u - u_h\|_{\CT_h}$, whereas this trend does not hold for $\|u - \nu_h^-\|_{\CT_h}$ (see Figures \ref{fig:L2_unif_plane_32pi-}, \ref{fig:L2_unif_plane_512pi-}, \ref{fig:L2_unif_plane_32pi+}, and \ref{fig:L2_unif_plane_512pi+}). Our current analysis does not fully explain this behavior.

\begin{figure}[h!]
	\centering
	\begin{subfigure}{0.32\textwidth}
		\centering
		\includegraphics[width=\textwidth]{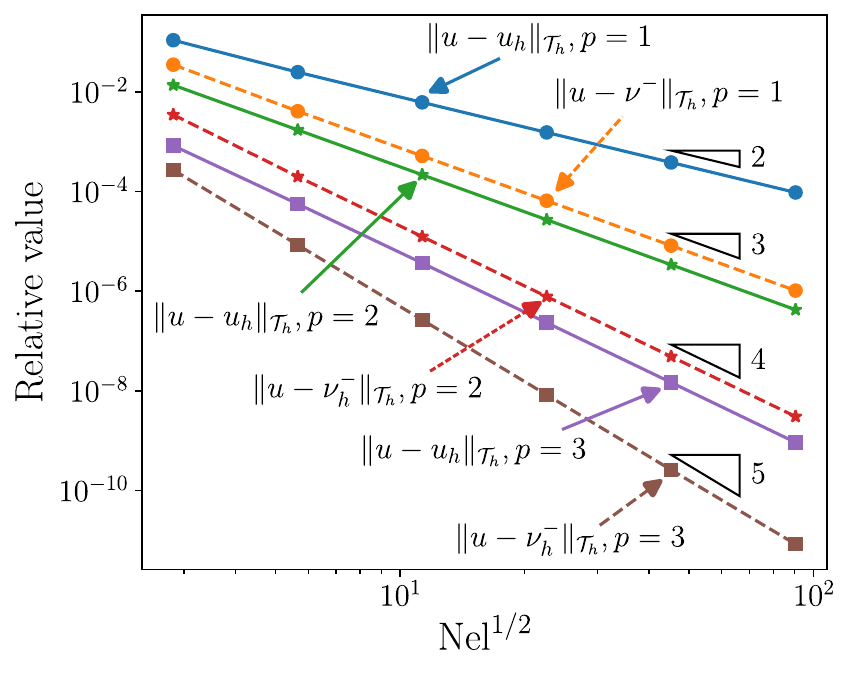}
		\caption{$\omega=\pi$, $\nu_h^-$}
		\label{fig:L2_unif_plane_pi-}
	\end{subfigure}
	\hfill
    \begin{subfigure}{0.32\textwidth}
		\centering
		\includegraphics[width=\textwidth]{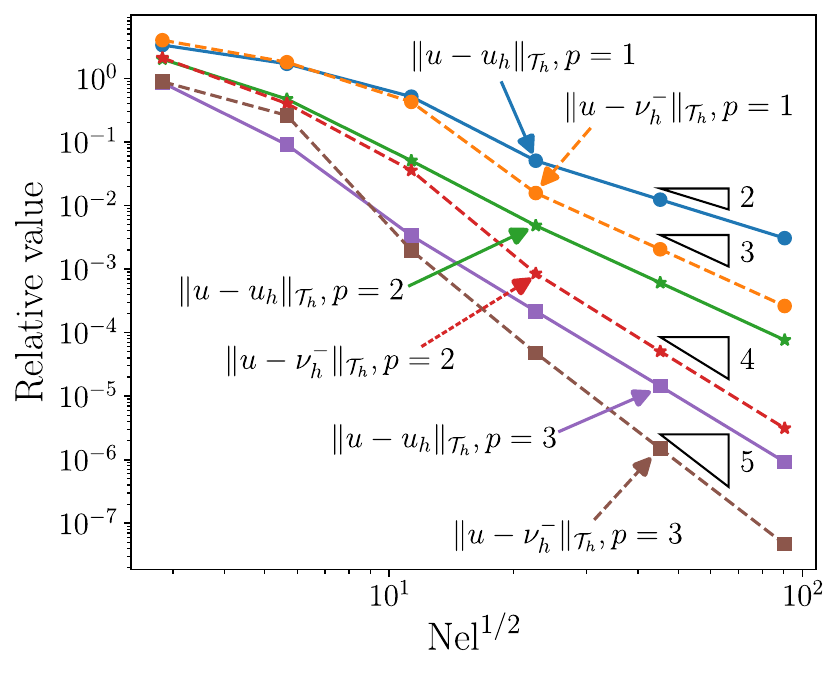}
		\caption{$\omega=4\sqrt{2}\pi$, $\nu_h^-$}
		\label{fig:L2_unif_plane_32pi-}
	\end{subfigure}
	\hfill
    \begin{subfigure}{0.32\textwidth}
		\centering
		\includegraphics[width=\textwidth]{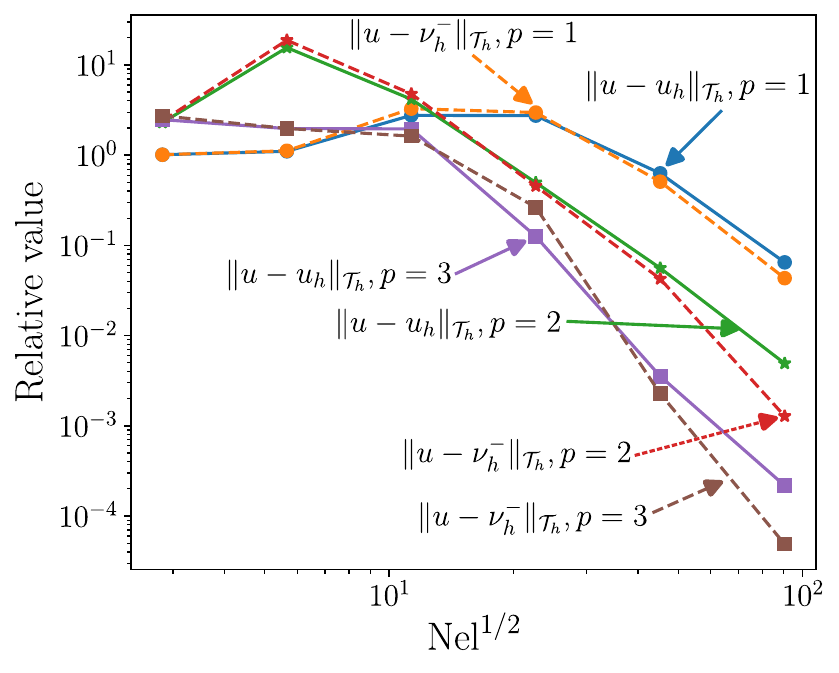}
		\caption{$\omega=16\sqrt{2}\pi$, $\nu_h^-$}
		\label{fig:L2_unif_plane_512pi-}
	\end{subfigure}
	\\
    \vspace{0.1cm}
    \begin{subfigure}{0.32\textwidth}
		\centering
		\includegraphics[width=\textwidth]{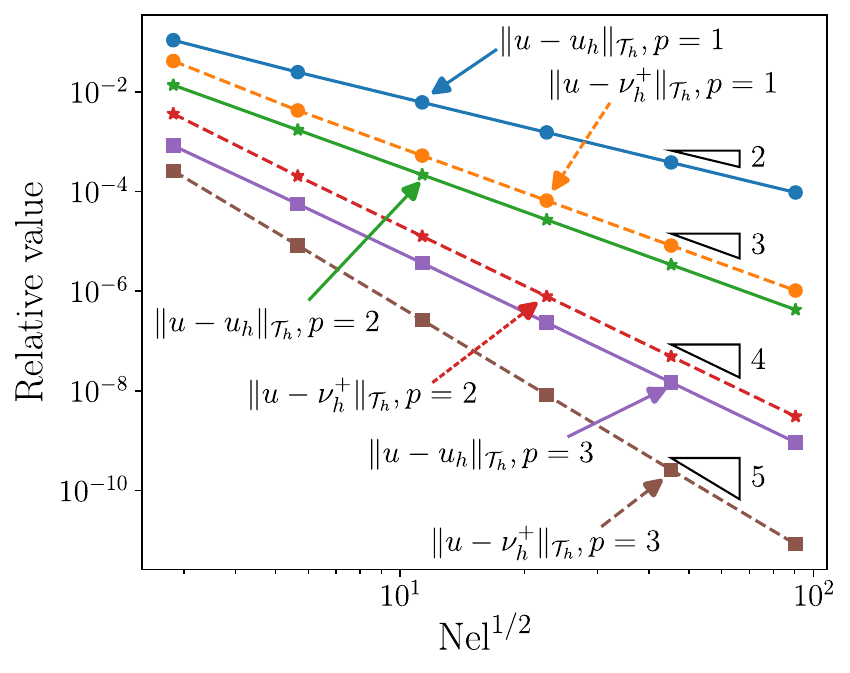}
		\caption{$\omega=\pi$, $\nu_h^+$}
		\label{fig:L2_unif_plane_pi+}
	\end{subfigure}
	\hfill
	\begin{subfigure}{0.32\textwidth}
		\centering
		\includegraphics[width=\textwidth]{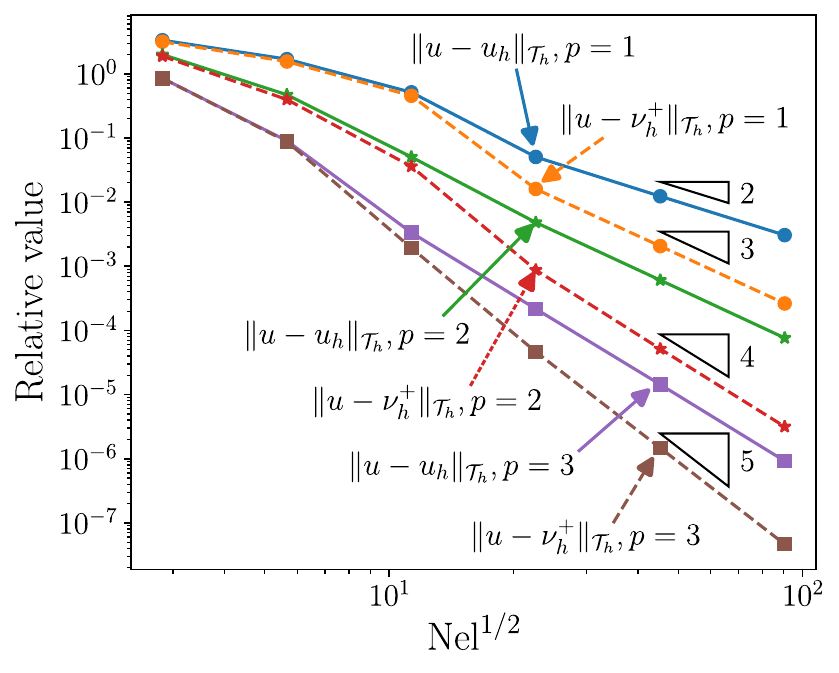}
		\caption{$\omega=4\sqrt{2}\pi$, $\nu_h^+$}
		\label{fig:L2_unif_plane_32pi+}
	\end{subfigure}
	\hfill
	\begin{subfigure}{0.32\textwidth}
		\centering
		\includegraphics[width=\textwidth]{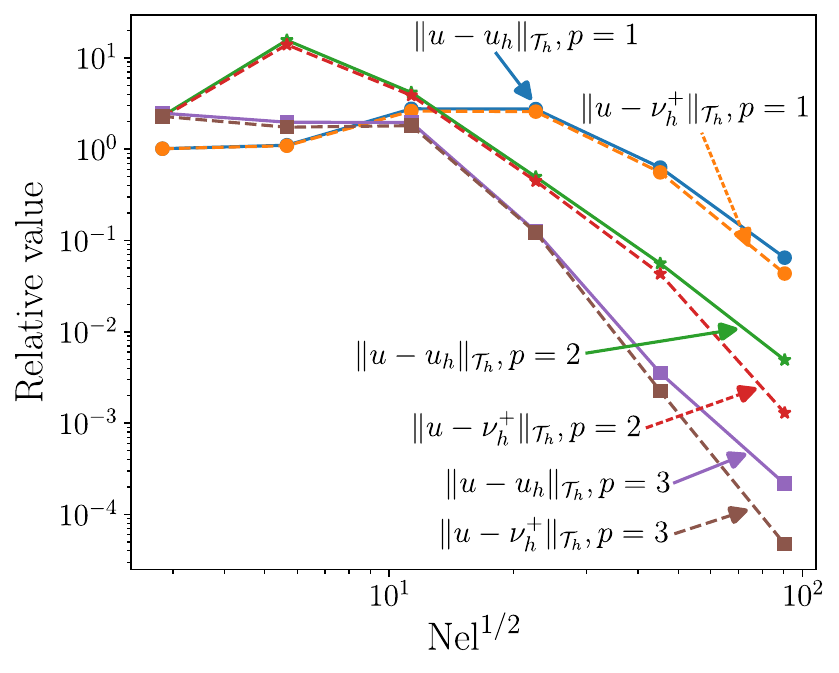}
		\caption{$\omega=16\sqrt{2}\pi$, $\nu_h^+$}
		\label{fig:L2_unif_plane_512pi+}
	\end{subfigure}
	\caption{Increment in the convergence rates of the $L^2$-error, uniform refinement, plane wave solution.}
	\label{fig:L2_unif_plane}
\end{figure}
 
Figure \ref{fig:error_unif_plane} shows the error for both postprocessed approximations measured in the norm $\|\cdot\|_{1,J\CT_h}$, together with their corresponding a posteriori error indicators. Here, we also observe similar behaviors for both schemes, with the corresponding curves (super)converging at the same rate in the resolved regime, which agrees with our theoretical findings. Indeed, we recall that the total error includes the $H^1$-norm of $u-\nu_h$ which converges as $h^{p+1}$, that is, an entire order higher than the optimal convergence rate $h^k$ for the $H^1$-norm of $u-u_h$. 

\begin{figure}[h!]
	\centering
	\begin{subfigure}{0.32\textwidth}
		\centering
		\includegraphics[width=\textwidth]{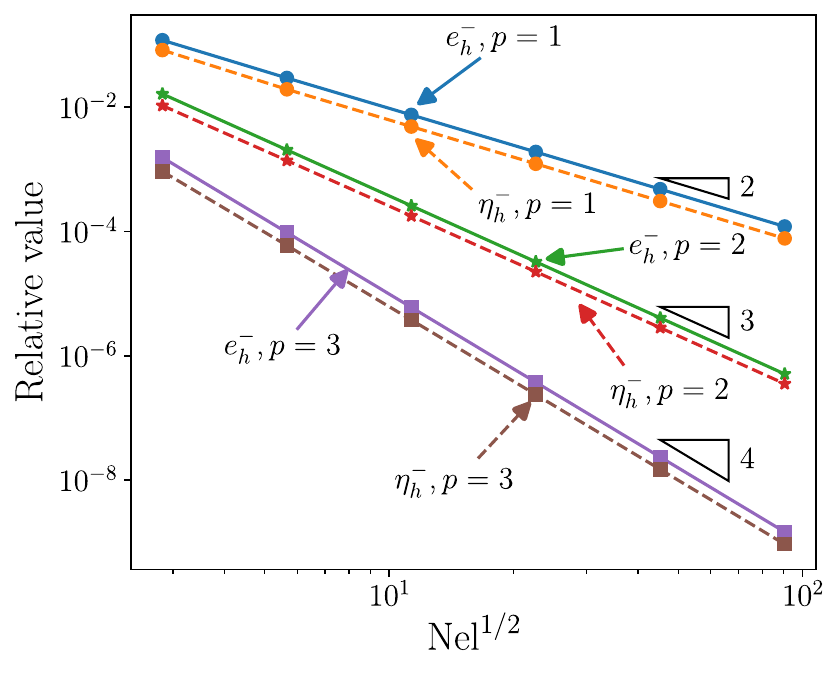}
		\caption{$\omega=\pi$, $\nu_h^-$ and $\eta_h^-$}
		\label{fig:error_unif_plane_pi-}
	\end{subfigure}
	\hfill
    \begin{subfigure}{0.32\textwidth}
		\centering
		\includegraphics[width=\textwidth]{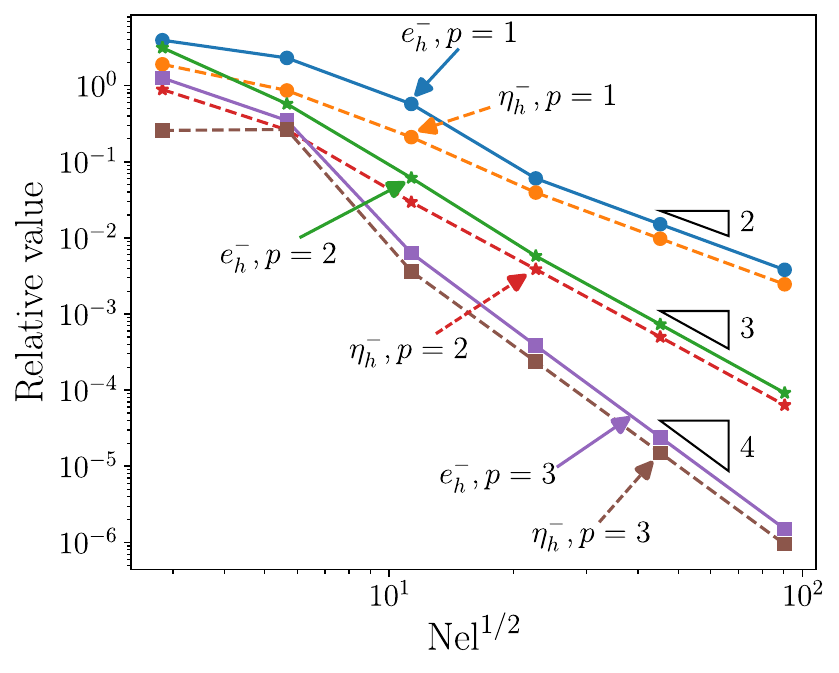}
		\caption{$\omega=4\sqrt{2}\pi$, $\nu_h^-$ and $\eta_h^-$}
		\label{fig:error_unif_plane_32pi-}
	\end{subfigure}
	\hfill
    \begin{subfigure}{0.32\textwidth}
		\centering
		\includegraphics[width=\textwidth]{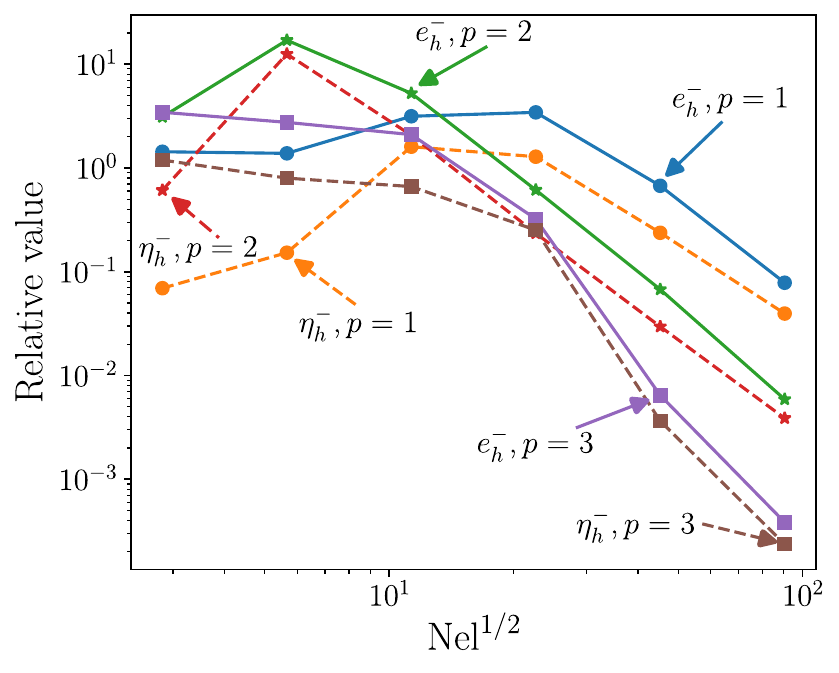}
		\caption{$\omega=16\sqrt{2}\pi$, $\nu_h^-$ and $\eta_h^-$}
		\label{fig:error_unif_plane_512pi-}
	\end{subfigure}
	\\
    \vspace{0.1cm}
    \begin{subfigure}{0.32\textwidth}
		\centering
		\includegraphics[width=\textwidth]{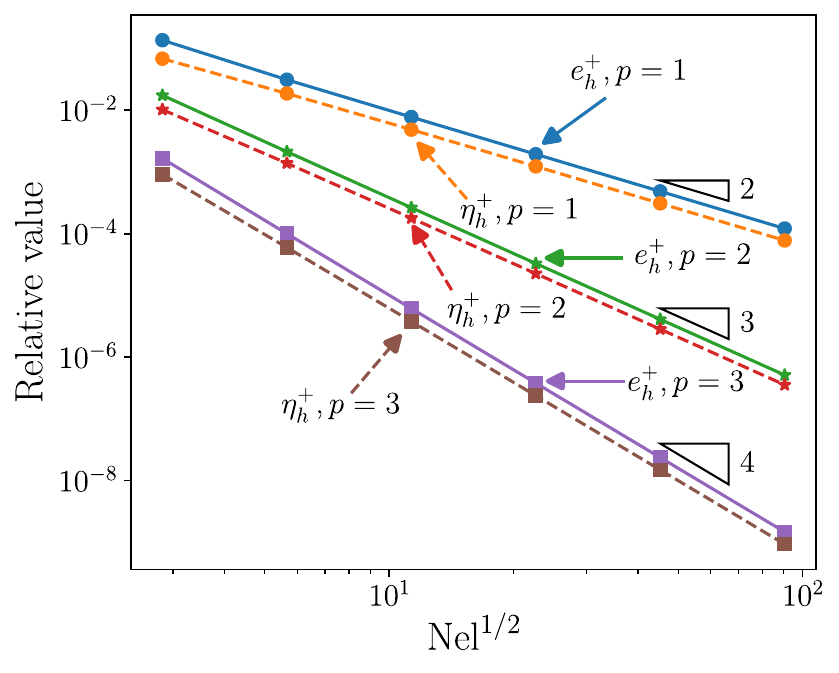}
		\caption{$\omega=\pi$, $\nu_h^+$ and $\eta_h^+$}
		\label{fig:error_unif_plane_pi+}
	\end{subfigure}
	\hfill
	\begin{subfigure}{0.32\textwidth}
		\centering
		\includegraphics[width=\textwidth]{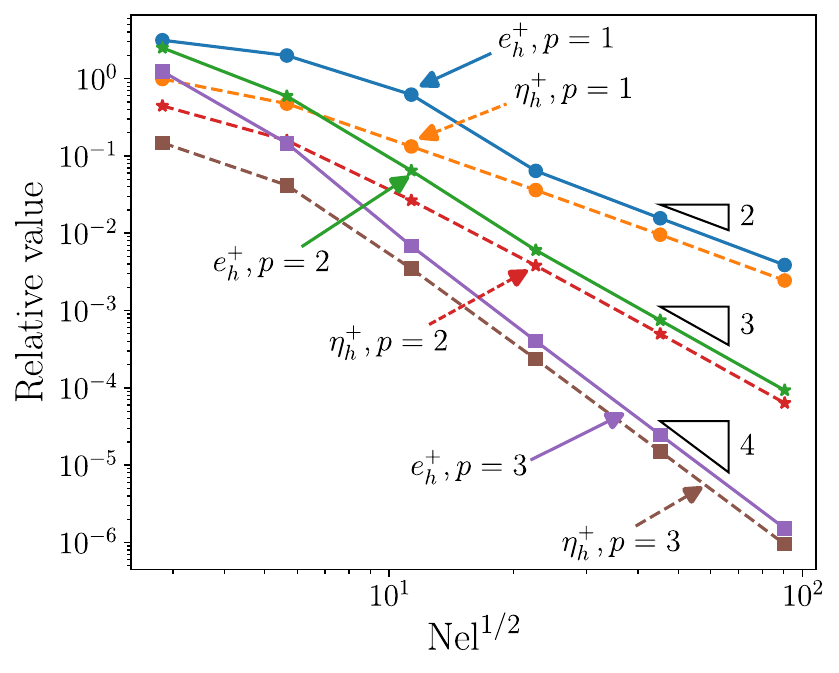}
		\caption{$\omega=4\sqrt{2}\pi$, $\nu_h^+$ and $\eta_h^+$}
		\label{fig:error_unif_plane_32pi+}
	\end{subfigure}
	\hfill
	\begin{subfigure}{0.32\textwidth}
		\centering
		\includegraphics[width=\textwidth]{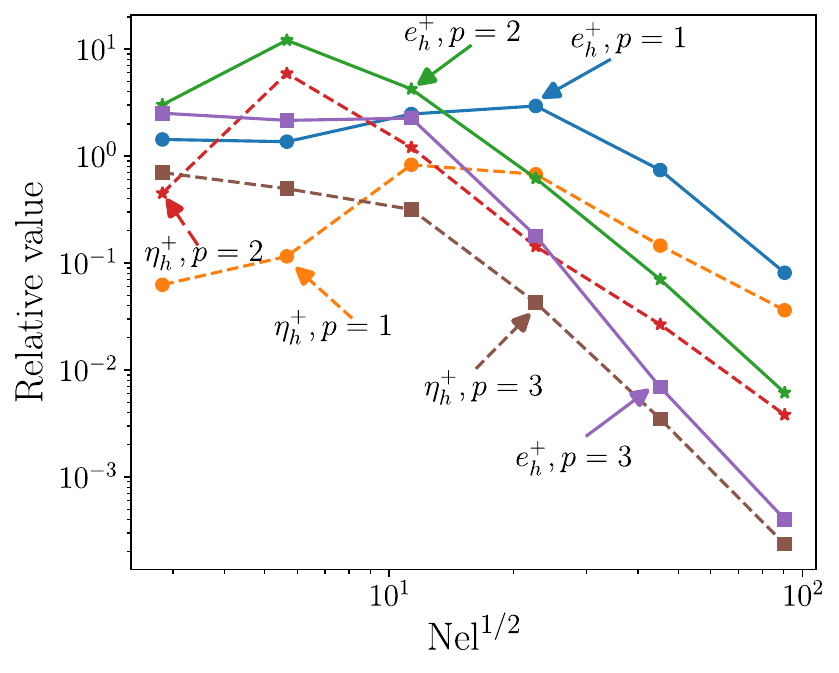}
		\caption{$\omega=16\sqrt{2}\pi$, $\nu_h^+$ and $\eta_h^+$}
		\label{fig:error_unif_plane_512pi+}
	\end{subfigure}
	\caption{$\mathrm{e}_h^\pm$ and $\eta_h^\pm$ vs. Nel$^{1/2}$ , uniform refinement, plane wave solution.}
	\label{fig:error_unif_plane}
\end{figure}

Finally, Figure \ref{fig:eff_unif_plane} presents the corresponding effectivity indices $\xi_h^\pm$ plots. As expected, we observe that the effectivity index remains bounded for a fixed $p$, with no significant differences between the schemes in the resolved regime. In the unresolved regime, however, we notice a discrepancy: $\xi_h^-$ exhibits a more oscillatory behavior, whereas $\xi_h^+$ tends to be smaller and shows a gradually increasing trend.

\begin{figure}[h!]
	\centering
	\begin{subfigure}{0.32\textwidth}
		\centering
		\includegraphics[width=\textwidth]{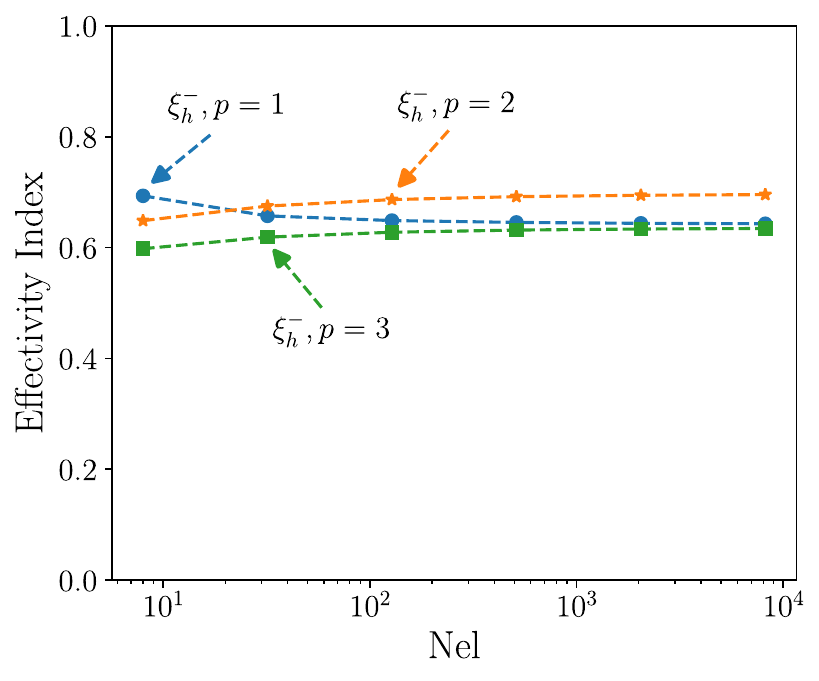}
		\caption{$\omega=\pi$, $\nu_h^-$ and $\eta_h^-$}
		\label{fig:eff_unif_plane_pi-}
	\end{subfigure}
	\hfill
    \begin{subfigure}{0.32\textwidth}
		\centering
		\includegraphics[width=\textwidth]{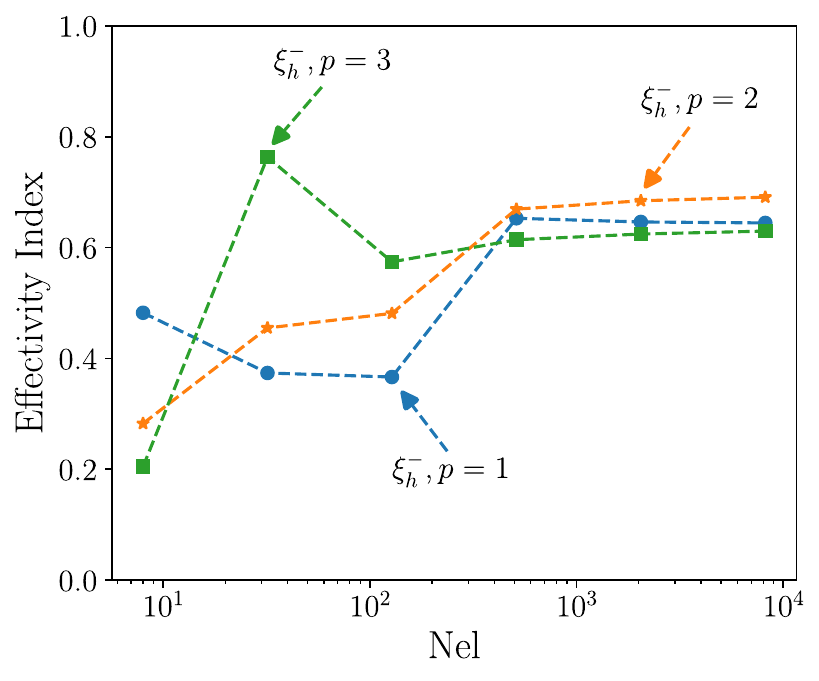}
		\caption{$\omega=4\sqrt{2}\pi$, $\nu_h^-$ and $\eta_h^-$}
		\label{fig:eff_unif_plane_32pi-}
	\end{subfigure}
	\hfill
    \begin{subfigure}{0.32\textwidth}
		\centering
		\includegraphics[width=\textwidth]{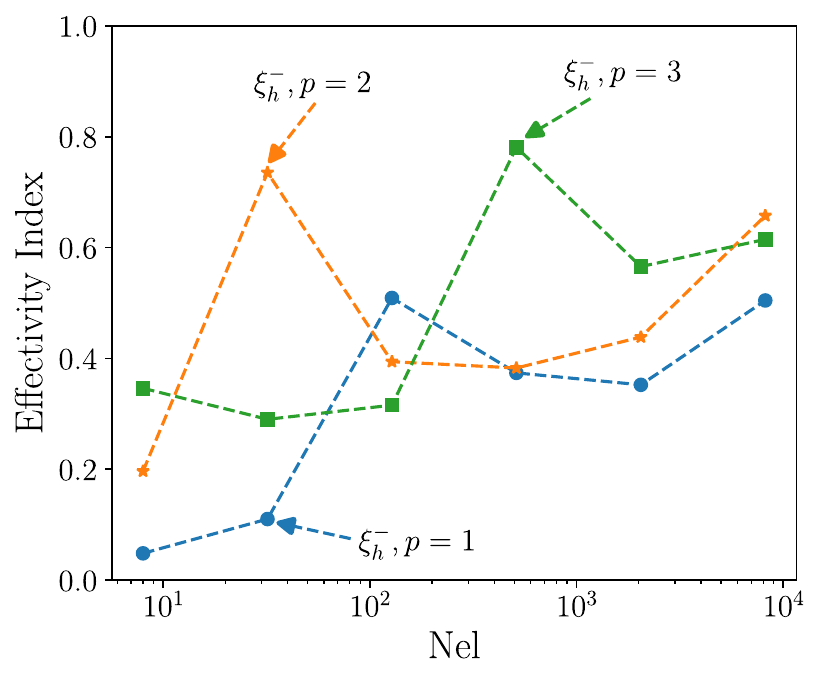}
		\caption{$\omega=16\sqrt{2}\pi$, $\nu_h^-$ and $\eta_h^-$}
		\label{fig:eff_unif_plane_512pi-}
	\end{subfigure}
	\\
    \vspace{0.1cm}
    \begin{subfigure}{0.32\textwidth}
		\centering
		\includegraphics[width=\textwidth]{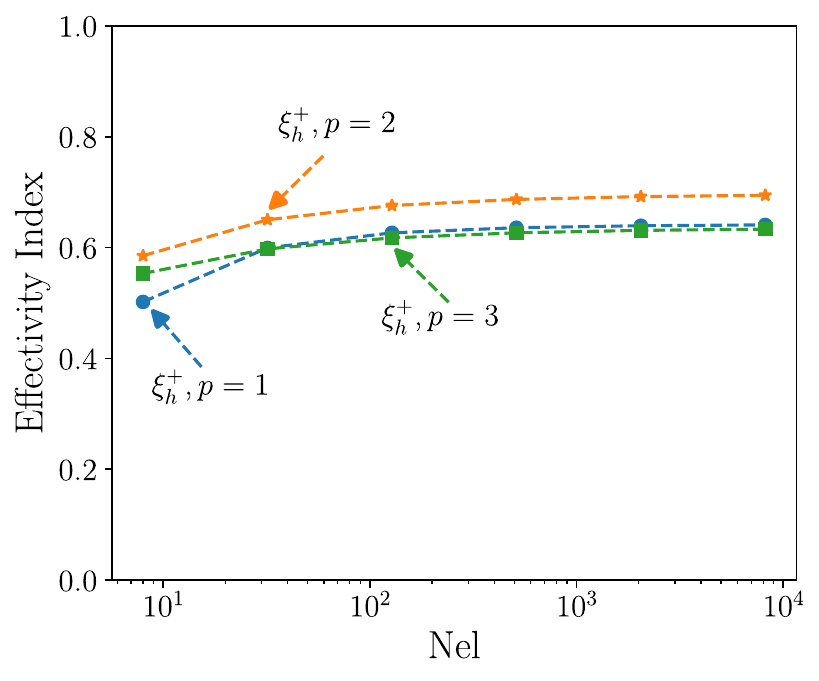}
		\caption{$\omega=\pi$, $\nu_h^+$ and $\eta_h^+$}
		\label{fig:eff_unif_plane_pi+}
	\end{subfigure}
	\hfill
	\begin{subfigure}{0.32\textwidth}
		\centering
		\includegraphics[width=\textwidth]{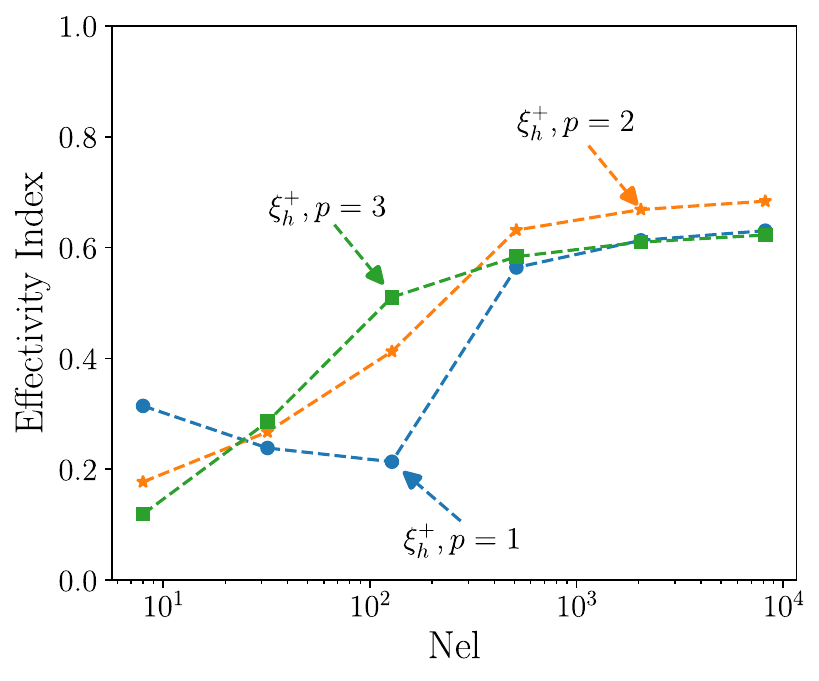}
		\caption{$\omega=4\sqrt{2}\pi$, $\nu_h^+$ and $\eta_h^+$}
		\label{fig:eff_unif_plane_32pi+}
	\end{subfigure}
	\hfill
	\begin{subfigure}{0.32\textwidth}
		\centering
		\includegraphics[width=\textwidth]{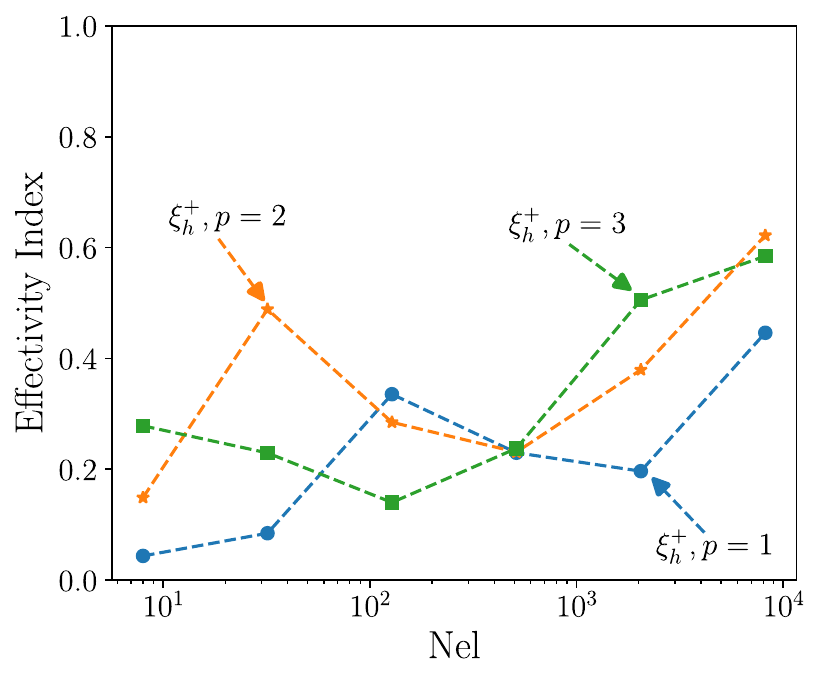}
		\caption{$\omega=16\sqrt{2}\pi$, $\nu_h^+$ and $\eta_h^+$}
		\label{fig:eff_unif_plane_512pi+}
	\end{subfigure}
	\caption{Effectivity index $\xi_h^\pm$ vs. Nel, uniform refinement, plane wave solution.}
	\label{fig:eff_unif_plane}
\end{figure}

\subsection{Singular solution}

To evaluate the performance of adaptive mesh refinement driven by our estimator, we consider a singular problem. Specifically, we set the Helmholtz equation \eqref{helmholtz} on the L-shaped domain $\Omega = (-1,1)^2 \setminus [0,1]^2$, where the exact solution in polar coordinates $(r, \varphi)$ is given by  
$$
u(r, \varphi) = \mathcal{J}_{2/3} (\omega r) \sin\left(\frac{2}{3}(\pi - \varphi)\right),
$$  
where $\mathcal{J}_{2/3}$ denotes the Bessel function of the first kind, $ f \equiv 0 $, and $ g $ is chosen accordingly. It is well known that this problem exhibits suboptimal convergence rates when using uniform mesh refinements (see, e.g., \cite[Example 4]{li_liu_yang_2023}). 

\begin{figure}[h!]
	\centering
	\begin{subfigure}{0.32\textwidth}
		\centering
		\includegraphics[width=\textwidth]{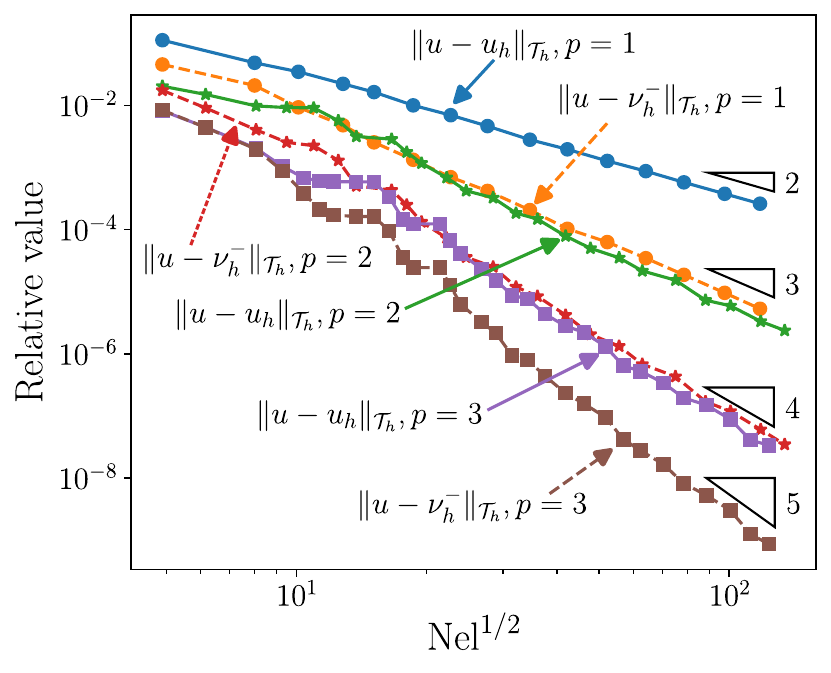}
		\caption{Enhancement of $L^2$-error}
		\label{fig:L2_adap_singular_pi-}
	\end{subfigure}
	\hfill
    \begin{subfigure}{0.32\textwidth}
		\centering
		\includegraphics[width=\textwidth]{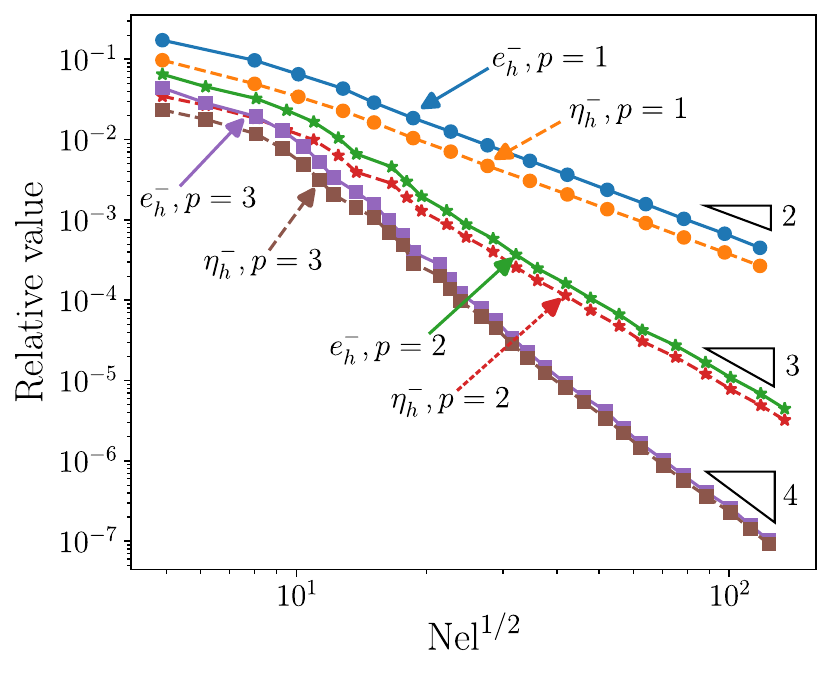}
		\caption{$\mathrm{e}_h^-$ and $\eta_h^-$ vs. $\mathrm{Nel}^{1/2}$}
		\label{fig:err_adap_singular_pi-}
	\end{subfigure}
	\hfill
    \begin{subfigure}{0.32\textwidth}
		\centering
		\includegraphics[width=\textwidth]{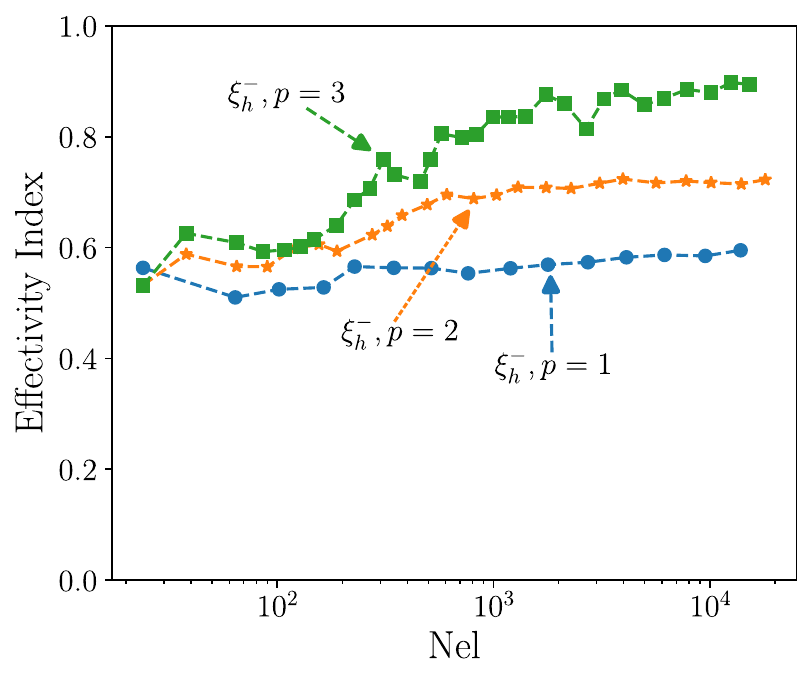}
		\caption{Effectivity index $\xi_h^-$}
		\label{fig:eff_adap_singular_pi-}
	\end{subfigure}

	\caption{$\omega=\pi$, $\nu_h^-$, and $\eta_h^-$; adaptive refinement, singular solution.}
	\label{fig:adap_singular_low}
\end{figure}

In Figure \ref{fig:adap_singular_low}, we summarize the results for a low-frequency case, considering the preprocessed solution $\nu_h^-$ and the a posteriori error indicator $\eta_h^-$. We observe that the adaptivity enhances the convergence rates, recovering the expected rates for smooth solutions (cf. Section \ref{numexp:plane}). We observe a similar behavior when considering the corresponding counterparts $\nu_h^+$ and $\eta_h^+$, so we omit these plots.

\begin{figure}[h!]
	\centering
       \begin{subfigure}{0.32\textwidth}
		\centering
		\includegraphics[width=\textwidth]{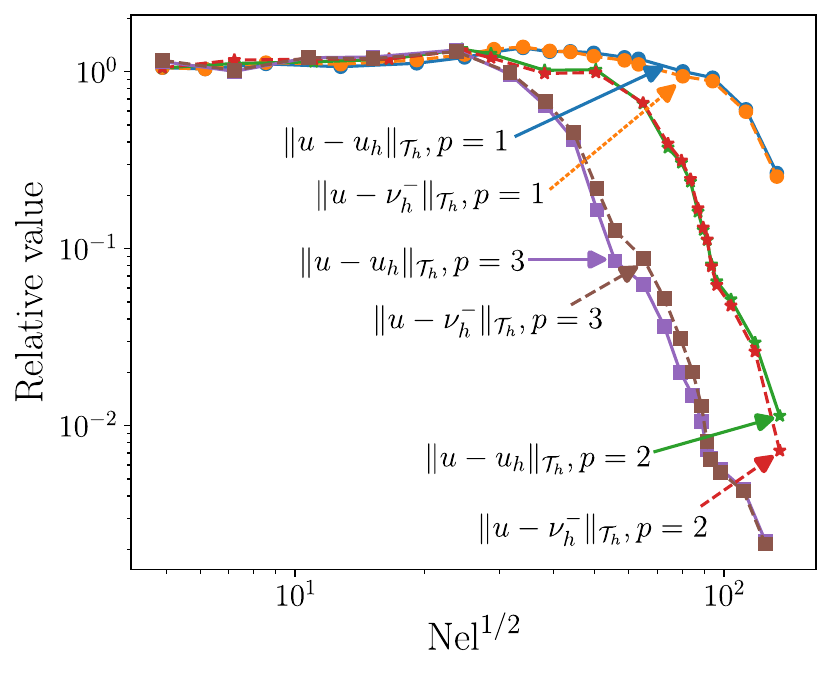}
		\caption{Enhancement of $L^2$-error}
		\label{fig:L2_adap_singular_512pi-}
	\end{subfigure}
	\hfill
	\begin{subfigure}{0.32\textwidth}
		\centering
		\includegraphics[width=\textwidth]{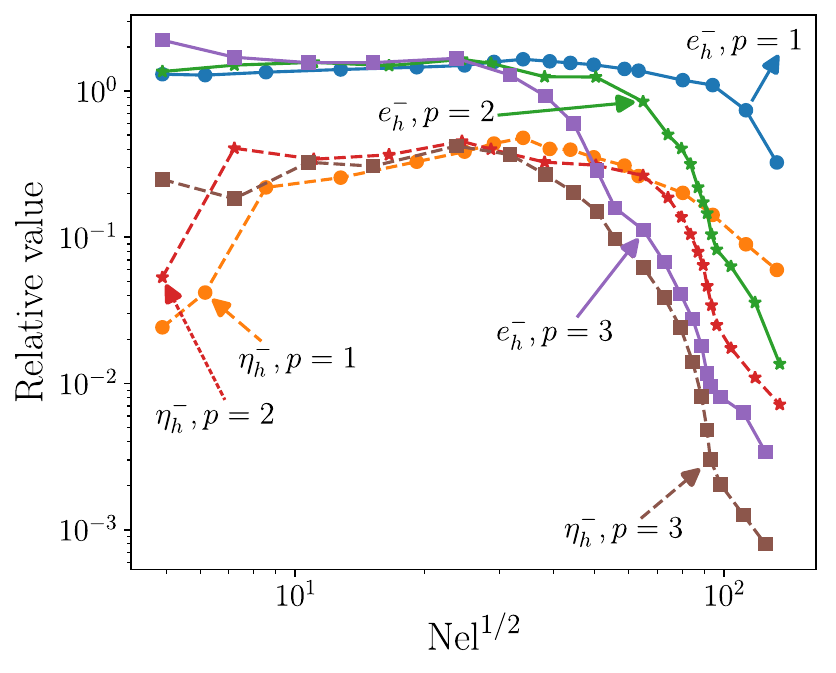}
		\caption{$\mathrm{e}_h^-$ and $\eta_h^-$ vs. $\mathrm{Nel}^{1/2}$}
		\label{fig:error_adap_singular_512pi-}
	\end{subfigure}
	\hfill
        \begin{subfigure}{0.32\textwidth}
		\centering
		\includegraphics[width=\textwidth]{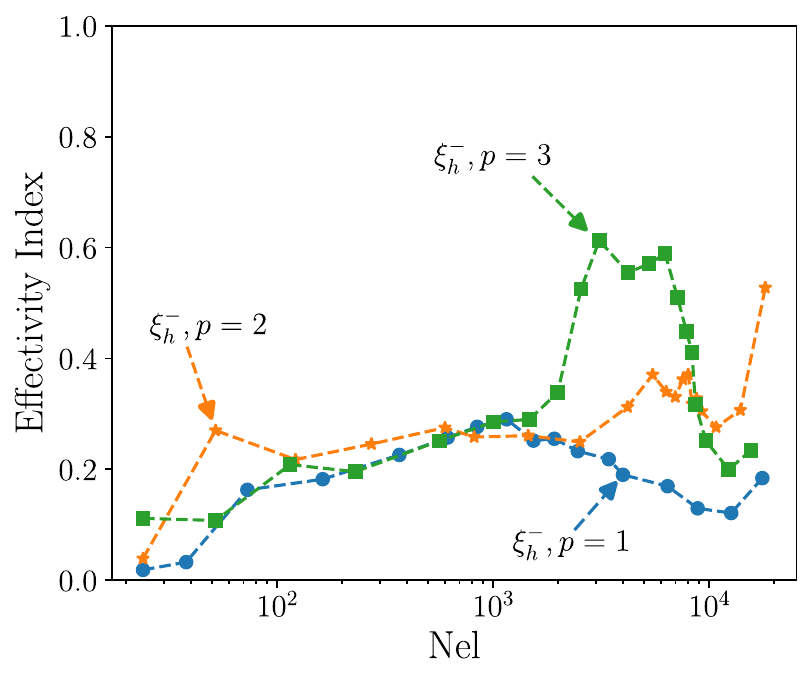}
		\caption{Effectivity index $\xi_h^-$}
		\label{fig:eff_adap_singular_512pi-}
	\end{subfigure}
	\\
    \vspace{0.1cm}
    \begin{subfigure}{0.32\textwidth}
		\centering
		\includegraphics[width=\textwidth]{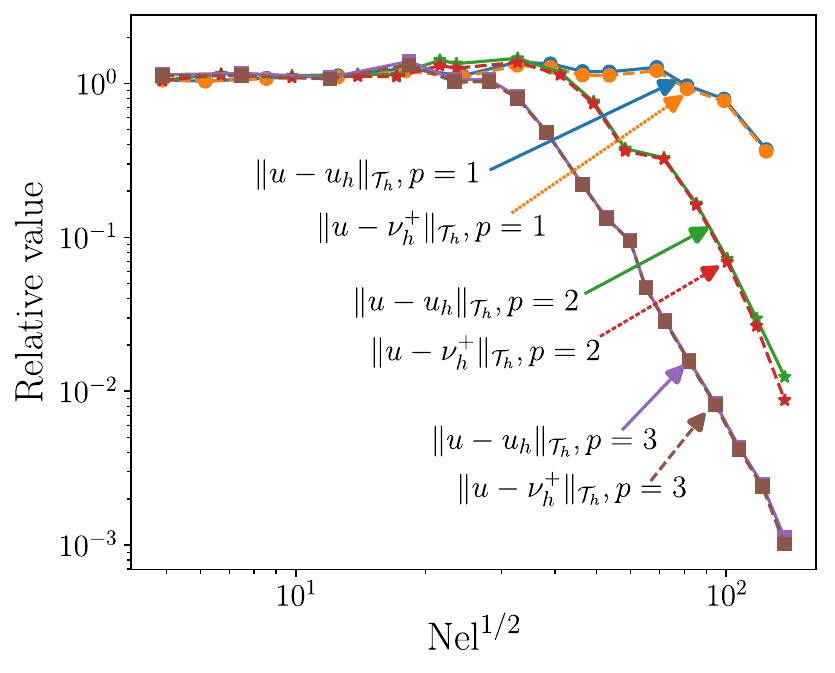}
		\caption{Enhancement of $L^2$-error}
		\label{fig:L2_adap_singular_512pi+}
	\end{subfigure}
	\hfill
        \begin{subfigure}{0.32\textwidth}
		\centering
		\includegraphics[width=\textwidth]{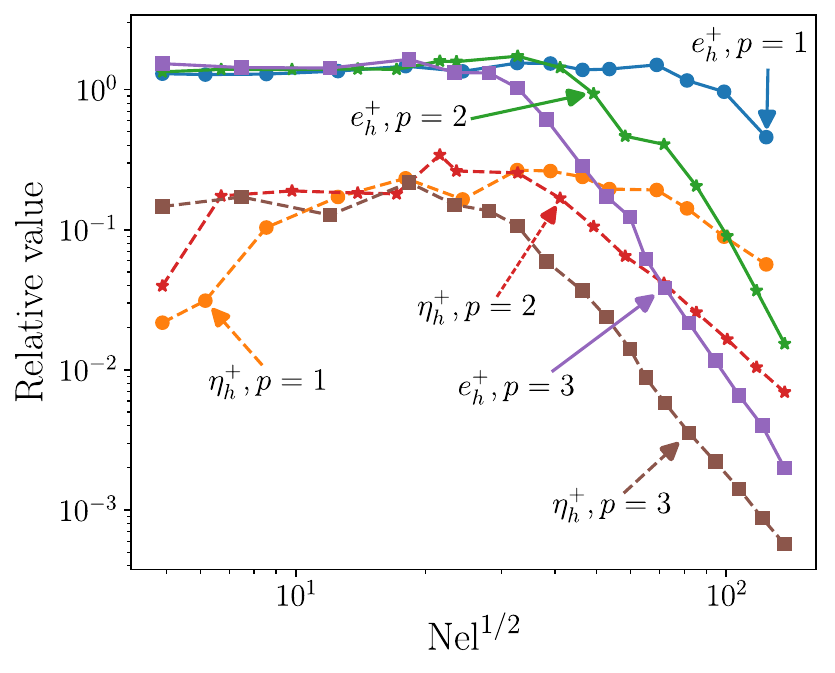}
		\caption{$\mathrm{e}_h^+$ and $\eta_h^+$ vs. $\mathrm{Nel}^{1/2}$}
		\label{fig:error_adap_singular_512pi+}
	\end{subfigure}
	\hfill
	\begin{subfigure}{0.32\textwidth}
		\centering
		\includegraphics[width=\textwidth]{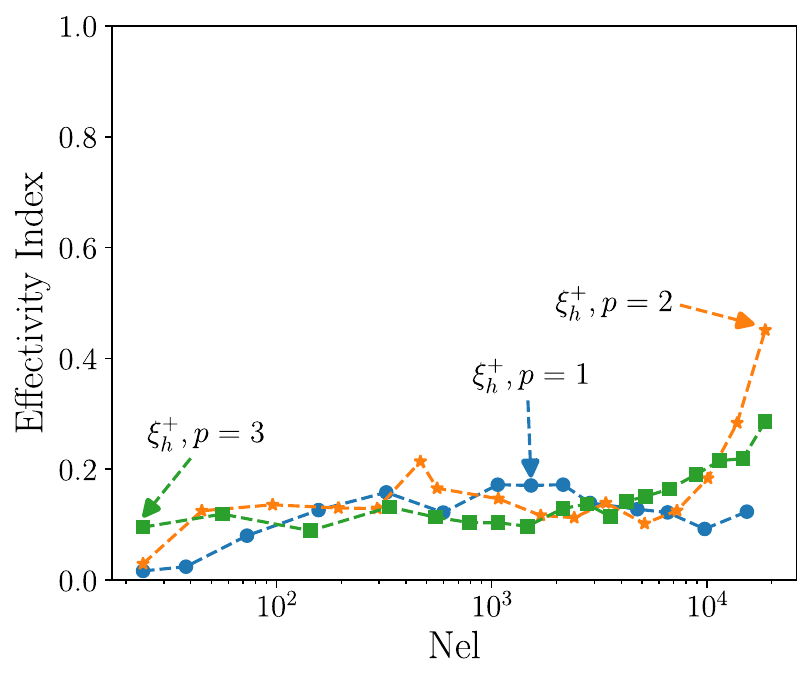}
		\caption{Effectivity index $\xi_h^+$}
		\label{fig:eff_adap_singular_512pi+}
	\end{subfigure}
	\caption{$\omega=16\sqrt{2}\pi$, $\nu_h^\pm$, and $\eta_h^\pm$; adaptive refinement, singular solution.}
	\label{fig:adap_singular_high}
\end{figure}

Finally, Figure \ref{fig:adap_singular_high} compares the performance of both postprocessed approximations and error indicators in a high-frequency case. Figures \ref{fig:L2_adap_singular_512pi-} and \ref{fig:L2_adap_singular_512pi+} illustrate a similar behavior for both postprocessed approximations. It is worth noting that $\|u - \nu_h^+\|_{\mathcal{T}_h}$ seems to be always below $\|u - u_h\|_{\mathcal{T}_h}$, whereas $\|u - \nu_h^-\|_{\mathcal{T}_h}$ does not exhibit the same trend. We observe a slightly superior performance of $(\eta_h^+, \mathrm{e}_h^+)$ (Figure \ref{fig:error_adap_singular_512pi+}) over $(\eta_h^-, \mathrm{e}_h^-)$ (Figure \ref{fig:error_adap_singular_512pi-}), as reflected in the earlier attainment of the convergence rates expected in the asymptotic regime. Additionally, we note a significant improvement in the stability of the effectivity index for $\xi_h^+$ (Figure \ref{fig:eff_adap_singular_512pi+}) compared to $\xi_h^-$ (Figure \ref{fig:eff_adap_singular_512pi-}), with noticeably more minor variations.

\section{Conclusions}
\label{sec:conclusion}

In this work, we proposed and analyzed two a posteriori error indicators for an HDG discretization of the Helmholtz problem with mixed boundary conditions. The key component in constructing these indicators is the residual representative obtained from a residual minimization problem.

Our methodology offers two main advantages. First, the residual minimization approach naturally provides a built-in residual representative, enabling the construction of a reliable and efficient a posteriori error indicator. Second, residual minimization is leveraged in local postprocessing schemes, allowing us to solve the associated saddle-point problems with minimal computational cost.

Additionally, our approach yields two distinct minimum-residual postprocessed approximations of the scalar variable, exhibiting superconvergence for sufficiently regular exact solutions in the resolved regime.

The proposed a posteriori error indicators can effectively guide adaptive mesh refinement procedures, addressing solution singularities and mitigating the pollution effect. Our numerical results for the two-dimensional Helmholtz equation demonstrate improved convergence rates and even superconvergence in the preasymptotic regime.

Future directions include further improvements in pollution reduction for high-frequency problems through more advanced postprocessing techniques and extensions to other time-harmonic wave propagation problems, such as Maxwell's equations.

\section*{Acknowledgments}
The work by Sergio Rojas was supported by the Chilean grant ANID FONDECYT No. 1240643. Patrick Vega's work was done in the Chilean grant ANID FONDECYT No. 3220858 framework. Liliana Camargo thanks the Office of Research from Pontificia Universidad Católica de Valparaíso for its support through the project DI Postdoctorado 2023.


\appendix

\section{Auxiliary results}\label{app:aux_results}
\renewcommand{\thesection}{A} 

The following auxiliary result is essential for establishing both the a priori error estimates and the efficiency estimate in the context of a posteriori error analysis (see Section \ref{sec:minres_alt}) for $u-\nu_h^\pm$.

\begin{lemma}\label{lemma_eff_alt}
	Let $(u,\bq) \in H^1(\Omega) \times \BH(\ddiv,\Omega)$ solve \eqref{helmholtz}, let $(u_h,\bq_h) \in \CP_p(\CT_h) \times \BCP_p(\CT_h)$ solve \eqref{eq:HDG_noextrarhs}, and let $(r_h^\pm,\nu_h^\pm)\in \CP_{p+2}(\CT_h)\times \CP_{p+1}(\CT_h) $ solve \eqref{eq:saddlepoint}. Then, the following holds:
	\begin{align}\label{eq:efficiency_alt}
		\|r_h^\pm\|_{1,K}&\leq \|\grad(u-\nu_h^\pm)\|_K+\omega\|u-\nu_h^\pm\|_K+\|\bq-\bq_h\|_K+\omega\|u-u_h\|_K\,,
	\end{align}
	for all $ K\in\CT_h$. 
\end{lemma}

\begin{proof}
	Thanks to equation~\eqref{eq:saddlepoint_a} and the fact that $ \bq+\grad u=0 $, for each $ K\in\CT_h $, we have
	\begin{align*}
		(\grad r_h^\pm,\grad v_K)_K+\omega^2(r_h^\pm,v_K)_K&=-(\bq_h,\grad v_K)_K-(\grad\nu_h^\pm,\grad v_K)_K\pm\omega^2(u_h-\nu_h^\pm,v_K)_K\\
		&=(\bq-\bq_h,\grad v_K)_K+(\grad(u-\nu_h^\pm),\grad v_K)_K\pm\omega^2(u_h-\nu_h^\pm,v_K)_K,
	\end{align*}
	for all $ v_K\in\CP_{p+2}(K)$\,. Thus, by using the Cauchy-Schwarz inequality, we get
	\begin{align*}
		(r_h^\pm,v_K)_{1,K}&=(\|\bq-\bq_h\|_K+\|\grad(u-\nu_h^\pm)\|_K)\|\grad v_K\|_K+\omega\|u_h-r_h^\pm\|_K\omega\|v_K\|_K\\
		&\leq(\|\bq-\bq_h\|_K+\|\grad(u-\nu_h^\pm)\|_K+\omega\|u_h-\nu_h^\pm\|_K)\|v_K\|_{1,K},
	\end{align*}
	and therefore
	\begin{equation}\label{eq:upper_bound_varepsK}
		\|r_h^\pm\|_{1,K}=\sup_{v_K\in \CP_{p+2}(K)}\frac{(r_h^\pm ,v_K)_{1,K}}{\|v_K\|_{1,K}}\leq\|\bq-\bq_h\|_K+\|\grad(u-\nu_h^\pm)\|_K+\omega\|u_h-\nu_h^\pm\|_K.
	\end{equation}
	Finally, we write
	\begin{equation}\label{eq:upper_bound_uh_nuh}
		\omega\|u_h-\nu_h^\pm\|_K\leq \omega\|u-\nu_h^\pm\|_K+\omega\|u-u_h\|_K,
	\end{equation}
	and \eqref{eq:efficiency_alt} follows from \eqref{eq:upper_bound_varepsK} and \eqref{eq:upper_bound_uh_nuh}.
\end{proof}

The following results, together with a saturation assumption (Assumption \ref{ass:saturation}), will allow us to obtain upper bounds to the $H^1(K)$-norm of $u-\nu_h^\pm$ (Theorem \ref{rel}). We start with $u-\nu_h^-$.

\begin{lemma}\label{lemma_rel_alt}
	Let $(u,\bq) \in H^1(\Omega) \times \BH(\ddiv,\Omega)$ solve \eqref{helmholtz}, let $(u_h,\bq_h) \in \CP_p(\CT_h) \times \BCP_p(\CT_h)$ solve \eqref{eq:HDG_noextrarhs}, let $\theta_h^- \in \CP_{p+2}(\CT_h)$
	solve \eqref{eq:aux_problem}, and let $(r_h^-,\nu_h^-)\in \CP_{p+2}(\CT_h)\times \CP_{p+1}(\CT_h) $ solve \eqref{eq:saddlepoint}. Then, for all $ K\in\CT_h $, the following holds true:
	\begin{align}\label{eq:reliability_prev_alt}
		\|\grad(\theta_h^--\nu_h^-)\|_K&\leq\sqrt{2}\left(1-\frac{(\omega h_K)^2}{\pi^2}\right)^{\!-1}\!\left(1+\frac{\omega h_K}{\pi}\right)\!\| r_h^-\|_{1,K}=\sqrt{2}\left(1-\frac{\omega h_K}{\pi}\right)^{\!-1}\!\| r_h^-\|_{1,K},\\      \label{eq:reliability_prev2_alt}
		\omega\|\theta_h^- -\nu_h^-\|_K&\leq \sqrt{2}\left(1-\frac{\omega h_K}{\pi}\right)^{\!-1}\!\frac{\omega h_K}{\pi}\| r_h^-\|_{1,K}.
	\end{align}
\end{lemma}
\begin{proof}
	Using \eqref{eq:saddlepoint_a}, we have
	\begin{align*}
		(\grad(\theta_h^- -\nu_h^-),\grad v_K)_K&=-(\grad\nu_h^-,\grad v_K)_K+(\grad\theta_h^-,\grad v_K)_K\\
		&=(r_h^-,v_K)_{\Xi_K}-\omega^2(\nu_h^-,v_K)_K+(\bq_h,\grad v_K)_K+\omega^2(u_h,v_K)_K\\
        &\quad\ +(\grad\theta_h^-,\grad v_K)_K\\
		&=(r_h^-,v_K)_{\Xi_K}+\omega^2(\theta_h^- -\nu_h^-,v_K)_K
	\end{align*}
	for all $v_K\in\CP_{p+2}(K)$. Since $v_K\in\CP_{p+2}(K)$ and $\theta_h^- -\nu_h^-\in\CP_{p+1}^\star(K)$, thanks to Poincar\'e inequality \cite{Poincare_ndim,PaWe1960}, we get
	\begin{align*}
		\|\grad(\theta_h^- -\nu_h^-)\|_K&=\sup_{v_K\in\CP_{p+2}(K)\setminus\{0\}}\frac{(\grad(\theta_h^- -\nu_h^-),\grad v_K)_K}{\|\grad v_K\|_K}\\
		&\leq\sup_{v_K\in\CP_{p+2}(K)\setminus\{0\}}\frac{(\grad r_h^-,\grad v_K)_K}{\|\grad v_K\|_K}+\omega^2\sup_{v_K\in\CP_{p+2}(K)\setminus\{0\}}\frac{(r_h^-,v_K)_K}{\|\grad v_K\|_K}\\
        &\quad\ +\omega^2\sup_{v_K\in\CP_{p+2}(K)\setminus\{0\}}\frac{(\theta_h^- -\nu_h^-,v_K)_K}{\|\grad v_K\|_K}\\
		&\leq\|\grad r_h^-\|_K+\frac{\omega h_K}{\pi}\omega\|r_h^-\|_K+\frac{\omega h_K}{\pi}\omega\|\theta_h^- -\nu_h^-\|_K\\
		&\leq\sqrt{2}\!\left(1+\frac{\omega h_K}{\pi}\right)\!\|r_h^-\|_{1,K}+\frac{(\omega h_K)^2}{\pi^2}\|\grad(\theta_h^- -\nu_h^-)\|_K
	\end{align*}
	and $\displaystyle\|\theta_h^--\nu_h^-\|_K\leq \frac{h_K}{\pi}\|\grad(\theta_h^- -\nu_h^-)\|_K$, that is
	\begin{align*}
		\|\grad(\theta_h^- -\nu_h^-)\|_K&\leq\sqrt{2}\left(1-\frac{(\omega h_K)^2}{\pi^2}\right)^{\!-1}\!\left(1+\frac{\omega h_K}{\pi}\right)\!\|r_h^-\|_{1,K}=\sqrt{2}\left(1-\frac{\omega h_K}{\pi}\right)^{\!-1}\!\|r_h^-\|_{1,K}
		\intertext{and}        \omega\|\theta_h^- -\nu_h^-\|_K&\leq \sqrt{2}\left(1-\frac{\omega h_K}{\pi}\right)^{\!-1}\!\frac{\omega h_K}{\pi}\|r_h^-\|_{1,K}.
	\end{align*}
\end{proof}
For $u-\nu_h^+$, we establish the following result, which generalizes \cite[Lemma A.2]{muga_rojas_vega_2022a}, replacing $|\cdot|_{1,K}$ with $\|\cdot\|_{1,K}$. Since its proof follows the same reasoning as that of \cite[Lemma A.2]{muga_rojas_vega_2022a}, we omit it.

\begin{lemma}\label{lemma_rel_alt+}
    Let $\theta_h^+ \in \CP_{p+2}(\CT_h)$
    solve \eqref{eq:aux_problem} and let $(r_h^+,\nu_h^+)\in \CP_{p+2}(\CT_h)\times \CP_{p+1}(\CT_h) $ solve \eqref{eq:saddlepoint}. Then, for all $ K\in\CT_h $, the following holds true:
    \begin{align}\label{eq:reliability_prev_alt+}
        \|\theta_h^+-\nu_h^+\|_{1,K}&=\|r_h^+\|_{1,K}.
    \end{align}
\end{lemma}

\renewcommand{\thesection}{Appendix B}
\section{A priori error estimates for the postprocessed approximation based on the Helmholtz operator.}\label{app:apriori_minus}
\renewcommand{\thesection}{B} 

The following result provides a priori error estimates for the frequency-dependent postprocessed solution $\widetilde{u}_h^-\in \mathcal{P}_{p+1}(\mathcal{T}_h)$, defined for each $K\in\mathcal{T}_h$ by  
\begin{align}\label{eq:postproc}  
    b_K^-(\widetilde{u}_h^-, v_K) = g_K^-(v_K) \qquad \forall v_K \in \mathcal{P}_{p+1}(K),  
\end{align}  
which is closely related to postprocessed approximations based on residual minimization (see \cite{muga_rojas_vega_2022a}).  

We highlight the explicit dependence of these estimates on the a priori error estimates for the HDG method \eqref{eq:HDG_noextrarhs}.

\begin{proposition}\label{thm:post_apriori-}
    Let $ (u,\bq)\in H^1(\Omega) \times \BH(\ddiv,\Omega) $ be the solution of \eqref{helmholtz}, let $(u_h,\bq_h)\in \CP_p(\CT_h) \times \BCP_p(\CT_h)$ be the solution of \eqref{eq:HDG_noextrarhs}, and let $\widetilde{u}_h^-\in \CP_{p+1}(\CT_h)$ be defined by \eqref{eq:postproc}. Then, the following estimates hold: 
	\begin{align}\label{apriori_pp}
		\omega\|u- \widetilde{u}_h^-\|_K\leq
		&\ 
		\frac{1}{\pi}\!\left(1-\frac{(\omega h_K)^2}{\pi^2}\right)^{\!-1}\!\omega h_K\!\left(\left\|\grad\big(u-Q_K^{p+1}u|_K\big)\right\|_K+\|\bq-\bq_h\|_K\right.\\
        &\hspace{4.5cm}\left.+\frac{\omega h_K}{\pi}\omega\!\left(\left\| u-Q_K^{p+1}u|_K\right\|_K+\|u-u_h\|_K\right)\right)\nonumber\\
		&\ +\omega\!\left\|u_h-\Pi_{W}u|_K\,\right\|_K
		+\omega\!\left\|u-Q_K^{p+1}u|_K\,\right\|_K,\nonumber\\
		\|\grad(u- \widetilde{u}_h^-)\|_K\leq
		&
		\left(1-\frac{(\omega h_K)^2}{\pi^2}\right)^{\!-1}\!\left(\left\|\grad\big(u-Q_K^{p+1}u|_K\big)\right\|_K+\|\bq-\bq_h\|_K\right.\\
        &\hspace{3.25cm}\left.+\frac{\omega h_K}{\pi}\omega\!\left(\left\|u-Q_K^{p+1}u|_K\right\|_K+\|u-u_h\|_K\right)\right)\nonumber\\
		&\ 
		+\left\|\grad(u-Q_K^{p+1}u|_K)\right\|_K,\nonumber
	\end{align}
	for all $ K\in\CT_h $ and all $h\in(0,\pi/\omega)$.
\end{proposition}

\begin{proof}
	Let $ v\in\CP_{p+1}^\star(\CT_h) $ be defined through $ v|_K=(I-Q_K)\big(Q_K^{p+1}u|_K- \widetilde{u}_h^-|_K\big) $, for each $ K\in\CT_h $. From \eqref{eq:postproc}, and the fact that $ \grad u + \bq=0 $, it follows that
	\begin{align*}
		\|\grad v\|_K^2-\omega^2\|v\|_K^2=&\left(\grad(I-Q_K)\big(Q_K^{p+1}u|_K- \widetilde{u}_h^-|_K\big),\grad v\right)_{\!K}\\
        &\ -\omega^2\left((I-Q_K)\big(Q_K^{p+1}u|_K- \widetilde{u}_h^-|_K\big),v\right)_K\\
		=&\left(\grad\big(Q_K^{p+1}u|_K- \widetilde{u}_h^-\big),\grad v\right)_{\!K}-\omega^2\left(Q_K^{p+1}u|_K- \widetilde{u}_h^-,v\right)_K\\
		=&\left(\grad Q_K^{p+1}u|_K,\grad v\right)_{\!K}+(\bq_h,\grad v)_K-\omega^2\left(Q_K^{p+1}u|_K,v\right)_K+\omega^2(u_h,v)_K\\
		=&-\left(\grad\big(u-Q_K^{p+1}u|_K\big),\grad v\right)_{\!K}-(\bq-\bq_h,\grad v)_K+\omega^2\left(u-Q_K^{p+1}u|_K,v\right)_K\\
        &\ -\omega^2(u-u_h,v)_K\,,
	\end{align*}
	and applying the Cauchy-Schwarz inequality, we obtain
	\begin{align}\label{eq:bound_v}
		\|\grad v\|_K^2-\omega^2\|v\|_K^2&\leq\left(\left\|\grad\big(u-Q_K^{p+1}u|_K\big)\right\|_K+\|\bq-\bq_h\|_K\right)\!\|\grad v\|_K\\
		&\qquad+\omega^2\!\left(\left\|u-Q_K^{p+1}u|_K\right\|_K+\|u-u_h\|_K\right)\!\|v\|_K\,.\nonumber
	\end{align}
	Since $ v|_K\in\CP_{p+1}^\star(K) $, \eqref{eq:bound_v} and the Poincar\'e inequality \cite{Poincare_ndim,PaWe1960} allow us to conclude that
	\begin{align}\label{eq:bound_v_final}
		&\left(1-\frac{(\omega h_K)^2}{\pi^2}\right)\!\|\grad v\|_K^2\nonumber\\
		&\leq\|\grad v\|_K^2-\omega^2\|v\|_K^2\\
		&\leq\left(\left\|\grad\big(u-Q_K^{p+1}u|_K\big)\right\|_K+\|\bq-\bq_h\|_K+\frac{\omega h_K}{\pi}\omega\!\left(\left\|u-Q_K^{p+1}u|_K\right\|_K+\|u-u_h\|_K\right)\right)\!\!\|\grad v\|_K,\nonumber
    \end{align}
    and then
        \begin{align}
		\omega\|v\|_K\nonumber
		&\leq\!\frac{\omega h_K}{\pi}\|\grad v\|_K\\
		&\leq\!\left(1-\frac{(\omega h_K)^2}{\pi^2}\right)^{\!-1}\!\frac{\omega h_K}{\pi}\!\bigg(\left\|\grad\big(u-Q_K^{p+1}u|_K\big)\right\|_K+\|\bq-\bq_h\|_K\\
        &\hspace{4.75cm}+\,\omega^2\frac{h_K}{\pi}\!\left(\left\|u-Q_K^{p+1}u|_K\right\|_K+\|u-u_h\|_K\right)\bigg).\nonumber
	\end{align}
	In addition, due to the definitions of the projections $Q_K$, $Q_K^{p+1}$, and $\Pi_W$, we have
	\begin{align}\label{eq:averages}
		\int_K Q_K Q_K^{p+1}u|_K=\int_K Q_K^{p+1}u|_K=\int_K u=\int_K \Pi_{W}u|_K=\int_K Q_K \Pi_{W}u|_K\,,
	\end{align}
	that is, $ Q_K Q_K^{p+1}u|_K=Q_K \Pi_{W}u|_K $. Moreover, recalling \eqref{eq:postproc}, tested with $v\in\CP_0(K)$, and considering the boundedness of $ Q_K $, we can conclude that
	\begin{align}\label{eq:proj_term}
		\big\|Q_K\big(Q_K^{p+1}u|_K- \widetilde{u}_h^-|_K\big)\!\big\|_K\!=\!\big\|Q_K\big(\Pi_{W}u|_K-u_h|_K\big)\!\big\|_K\!\leq\!\big\|\Pi_{W}u|_K-u_h\big\|_K\,.
	\end{align}
	Hence, \eqref{apriori_pp} follows from \eqref{eq:bound_v_final}, \eqref{eq:proj_term}, and the splitting
	\begin{align*}
		(u- \widetilde{u}_h^-)|_K=v|_K+Q_K\big(Q_K^{p+1}u|_K- \breve{u}_h|_K\big)+\big(u|_K-Q_K^{p+1}u|_K\big),
	\end{align*}
	for all $ K\in\CT_h $.
\end{proof}

\begin{remark}
    The postprocessed approximation $\widetilde{u}_h^-$ also verifies the estimates in Theorem \ref{thm:apriori_secondpostpm}. Therefore, $\widetilde{u}_h^-$ possesses the superconvergence we also prove for $\nu_h^\pm$ for sufficiently regular exact solutions in the resolved regime.
\end{remark}



\bibliographystyle{elsarticle-harv} 






\end{document}